\newcommand{\N}{\mathbb{N}}
\newcommand{\R}{\mathbb{R}}
\newcommand{\C}{\mathbb{C}}
\newcommand{\D}{\mathbb{D}}
\newcommand{\T}{\mathbb{T}}
\newcommand{\Lcont}{\mathcal{L}}
\newcommand{\norm}[1]{\left\Vert #1\right\Vert}
\newcommand{\abs}[1]{\left\lvert #1 \right\rvert}
\newcommand{\scal}[2]{\ensuremath{\left\langle #1|#2 \right\rangle}\xspace}
\newcommand{\codim}{\text{codim}}
\theoremstyle{plain}
\newtheorem{statement}{}[section]
\newtheorem{theo}[statement]{Theorem}
\newtheorem{prop}[statement]{Proposition}
\newtheorem{coro}[statement]{Corollary}
\newtheorem{lem}[statement]{Lemma}
\newtheorem{quest}[statement]{Question}
\theoremstyle{definition}
\newtheorem{rem}[statement]{Remark}
\title[Embedding of some classes of operators]{Embedding of some classes of operators into strongly continuous semigroups}
\author{Isabelle Chalendar and Romain Lebreton}
\address{Laboratoire d'Analyse et de Mathématiques Appliqués, Université Gustave Eiffel, 77454 Champs-sur-Marne Cédex 2}
\email{isabelle.chalendar@univ-eiffel.fr}
\address{Laboratoire Paul Painlevé, Université de Lille, 59655 Villeneuve d'Ascq Cédex}
\email{romain.lebreton@univ-lille.fr}
\keywords{semigroup, semiflow, embedding, isometry, composition operator, Toeplitz operator}
\subjclass{47D03,47B33,47B35}
\thanks{The second author is supported by Labex CEMPI (ANR-11-LABX-0007-01)}
\begin{document}

\maketitle

\begin{abstract}
    In this paper we study the embedding problem of an operator into a strongly continous semigroup. We obtain characterizations for some classes of operators, namely composition operators  and analytic Toeplitz operators on the Hardy space $H^2$. In particular, we focus  on the  isometric ones using  the necessary and sufficient condition observed by T. Eisner.
\end{abstract}

\section{Introduction}

If $T$ is a bounded linear operator on a Banach space $E$, then $T$ is said to be \textit{embeddable} into a strongly continuous semigroup if there exists $(T_t)_{t \ge 0}$ a family of bounded linear operators on $E$ which is  a strongly continous semigroup such that $T = T_1$.

This notion has been studied by T. Eisner especially in the monograph \cite{TanEis} and it turned out to be a very interesting and challenging problem. There is no known necessary and sufficient condition for any operator $T$ on a Banach space, but T. Eisner showed  that if $T$ is embeddable, then $\dim(\ker(T))$ and $\codim(\overline{\mbox{Im}}(T))$ are either $0$ or $\infty$. This condition shows us  that the forward shift operator $S$ on $H^2$ and all its powers are not embeddable into a strongly continous semigroup on $H^2$.

One of the first easy examples is the embedding of the Volterra operator $V : f \in L^p([0,1]) \longmapsto Vf(x) = \int_0^x f(s) ~ds$ into the Riemann-Liouville semigroup on $L^p([0,1])$, for $1 \le p < \infty$ \cite{alam2024,WolfgangVolterra}.

However, there exist some necessary and sufficient conditions for special classes of operators and we higlight here the following result on isometric operators obtained by T. Eisner in   \cite{TanEis1} and \cite[Theorem V.1.19]{TanEis}.  

\begin{theo}\label{PlongIsoTanja}
    Let $V : H \rightarrow H$ be an isometry on a Hilbert space $H$. Then $V$ is embeddable into a $C_0$-semigroup on $H$ if and only if $V$ is unitary or $\codim(VH) = \infty$.
\end{theo}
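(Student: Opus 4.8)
The plan is to prove the two implications separately: necessity will follow almost immediately from Eisner's general obstruction recalled in the introduction, while sufficiency requires a Wold-type splitting together with one explicit semigroup.

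For the implication ``$\Rightarrow$'', suppose $V=T_1$ for some $C_0$-semigroup $(T_t)_{t\ge 0}$ on $H$. Since $V$ is an isometry, $\ker V=\{0\}$, and because $H$ is complete the range $VH$ is closed, so $\overline{\mathrm{Im}}(V)=VH$. Eisner's obstruction then forces $\codim(VH)\in\{0,\infty\}$. If $\codim(VH)=0$ then $V$ is a surjective isometry, i.e.\ unitary; otherwise $\codim(VH)=\infty$. This settles necessity with no further work.

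For ``$\Leftarrow$'', I would first dispose of the unitary case. If $V$ is unitary then $\sigma(V)\subseteq\mathbb{T}$, and applying the Borel functional calculus to the bounded branch $e^{i\theta}\mapsto i\theta$ on $[0,2\pi)$ produces a bounded normal operator $A$ with $e^A=V$; then $T_t:=e^{tA}$ is a norm-continuous (hence strongly continuous) one-parameter group with $T_1=V$, so $V$ is embeddable. Now assume $\codim(VH)=\infty$. Use the Wold decomposition $H=H_u\oplus H_s$, where $H_u=\bigcap_{n\ge 0}V^nH$ reduces $V$ to a unitary $V|_{H_u}$, and $H_s=\bigoplus_{n\ge 0}V^nL$ with $L=H\ominus VH$ the wandering subspace, so that $V|_{H_s}$ is a unilateral shift of multiplicity $\dim L=\codim(VH)=\infty$. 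The crucial point is to realize this shift as the time-one map of a $C_0$-semigroup: on $L^2(\mathbb{R}_+,\mathcal H_0)$, with $\mathcal H_0$ an auxiliary Hilbert space chosen so that $\dim L^2([0,1),\mathcal H_0)=\dim L$ (trivially $\mathcal H_0=\mathbb{C}$ when $H$ is separable, and a cardinal-arithmetic adjustment otherwise, using $\aleph_0\cdot\kappa=\kappa$ for infinite $\kappa$), the right-translation semigroup $S_tf(x)=f(x-t)$ for $x\ge t$ and $0$ for $x<t$ is a $C_0$-semigroup of isometries; decomposing $L^2(\mathbb{R}_+,\mathcal H_0)=\bigoplus_{n\ge 0}L^2([n,n+1),\mathcal H_0)$ shows that $S_1$ maps the $n$-th summand isometrically onto the $(n+1)$-th, hence is a unilateral shift of multiplicity $\dim L^2([0,1),\mathcal H_0)=\dim L$. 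Thus $V|_{H_s}$ is unitarily equivalent to $S_1$. Embedding $V|_{H_u}$ via the unitary case and $V|_{H_s}$ via $(S_t)$, the orthogonal direct sum of the two semigroups is again a $C_0$-semigroup (the semigroup law and strong continuity pass to direct sums, and both factors are contractive so the family is locally bounded) whose time-one map is $V$.

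The main obstacle is the shift case of the sufficiency direction: one must exhibit a concrete $C_0$-semigroup whose time-one operator is a pure shift of the prescribed multiplicity, and the translation semigroup on $L^2(\mathbb{R}_+,\mathcal H_0)$ is the natural candidate. Checking that $S_1$ has exactly the right multiplicity — and seeing that it is precisely the hypothesis $\codim(VH)=\infty$ that permits the multiplicity matching, since $L^2([0,1),\mathcal H_0)$ is forced to be infinite-dimensional — is where the argument has to be carried out with care; the unitary half and the assembly of the direct sum are then routine.
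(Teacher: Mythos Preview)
Your argument is correct, but note that the paper does not itself prove Theorem~\ref{PlongIsoTanja}: it is quoted from Eisner \cite{TanEis1}, \cite[Theorem~V.1.19]{TanEis} and used as a black box. That said, the paper's later applications (Corollary~\ref{cor:form} and the remark following Theorem~3.5) expose the mechanism of Eisner's proof --- Wold decomposition into a unitary part and a shift part, embedding of the unitary part via Borel functional calculus \cite[Theorem~V.1.14]{TanEis}, and embedding of the shift via the right-translation semigroup on $L^2(\R_+,\cdot)$ \cite[Proposition~V.1.18]{TanEis} --- and this is exactly the route you take, including the observation that the multiplicity matching requires $\dim(H\ominus VH)=\infty$. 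So your proposal reproduces the intended proof.

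One small remark: your justification that the direct sum of the two semigroups is a $C_0$-semigroup does not need the contractivity/local-boundedness clause; strong continuity at $0$ passes to orthogonal direct sums directly, since for $x=x_u\oplus x_s$ one has $\norm{T_t x - x}^2 = \norm{T_t^{(u)}x_u - x_u}^2 + \norm{T_t^{(s)}x_s - x_s}^2 \to 0$.
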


This theorem is very useful in the case of composition operators or analytic Toeplitz operators where it is easy to characterize the  isometric ones. The main  goal of this paper is then  to describe the embedding of such isometric operators, and to make the associated  semigroup  explicit.

The paper is organized as follows. In section \ref{background}, we recall some useful properties on (analytic) semigroup theory of operators and the main tools required on Hardy spaces. We also give a key lemma on Blaschke products  for the main result on composition operators. Section \ref{embedding} is concerned with  the embedding of isometric composition operators and also the embedding  of analytic Toeplitz operators. Finally, in Section \ref{PropOpIso}, we show that the embedding of an isometry into a semigroup of contractions $(T_t)_{t\geq 0}$ implies that  each $T_t$ is  isometric. Moreover, the  embedding of an isometry into a semigroup  $(T_t)_{t\geq 0}$ (not necessarily contractive) implies that, for each $t>0$,  $T_t$ is never compact. 

\section{Background and preliminaries}\label{background}

\subsection{Strongly continuous semigroups of operators}

Let $E$ be a Banach space and $(T_t)_{t \ge 0} \subset \Lcont(E)$, the space of all linear and bounded operators on $E$ endowed with its usual norm. We say that $(T_t)_{t \ge 0}$ is a \textit{strongly continuous semigroup} or just a \textit{$C_0$-semigroup} if $$T_0 = Id, \quad T_{t+s} = T_t \circ T_s ,\quad  t,s > 0$$ and for all $x \in E$, $$t \in \R_+ \longmapsto T_t x \text{~is continuous i.e.~} \norm{T_t x - x}_E \underset{t \rightarrow 0^+}{\longrightarrow} 0.$$
\color{black} See for example \cite{SemiGroup} for an introduction to semigroup theory of operators.

We recall here that an operator $T \in \Lcont(E)$ is \textit{embeddable} into a $C_0$-semigroup on $E$ if there exists $(T_t)_{t \ge 0}$ a $C_0$-semigroup on $E$ such that $T = T_1$. In this case, we write $$T \hookrightarrow (T_t)_{t \ge 0}.$$ 

\subsection{Analytic semiflows on \texorpdfstring{$\D$}{D}}\label{AnalyticSF}

Let $\D = \{z \in \C : \abs{z} < 1 \}$ be the open unit disc of the complex plane $\C$ and $(\varphi_t)_{t \ge 0}$ be a family of analytic self-maps of $\D$. We say that $(\varphi_t)_{t \ge 0}$ is a \textit{semiflow of analytic self-maps of $\D$} if $$\varphi_0 = Id, \quad \varphi_{t + s} = \varphi_t \circ \varphi_s ,\quad  t,s \geq 0$$ and for all $z\in\D$, $$t \in \R_+ \longmapsto \varphi_t (z)\text{~is continuous}.$$
Note that the pointwise continuity assumption is equivalent to the uniform continuity on all compact subsets of $\D$ via Montel's theorem.


It is a well known fact that when $(\varphi_t)_{t \ge 0}$ is a semiflow of analytic self-maps of $\D$, each function $\varphi_t$ is one-to-one. See \cite{BraConDia} for a proof using Cauchy-Lipschitz's theory or \cite{DirChalBen} for an alternative elementary proof. We recommend  \cite{BraConDia} for a very complete state of the art  of  analytic semiflow theory.

In the same way, we say that $\varphi \in \mbox{Hol}(\D)$ is \textit{embeddable} into a semiflow of analytic self-maps of $\D$ if there exists $(\varphi_t)_{t \ge 0}$ a semiflow of analytic self-maps of $\D$ such that $\varphi = \varphi_1$. In this case, we write $$\varphi \hookrightarrow (\varphi_t)_{t \ge 0}.$$ As an example, let $\varphi$ be  an elliptic automorphism of $\D$ i.e.  $\varphi$ is an holomorphic and bijective function on $\D$ with a fixed point $\alpha \in \D$. Equivalently, there exist $\theta\in \R$ and  $\alpha\in\D$ such that 
\begin{equation}\label{AutElliptic}
    \varphi = \tau_\alpha \circ R_\theta \circ \tau_\alpha,\mbox{ where } R_\theta : z \longmapsto e^{i\theta}z\mbox{ and } \tau_\alpha : z \longmapsto \frac{\alpha - z}{1 - \overline{\alpha}z}.
\end{equation}
Note that $\tau_\alpha$ is an automorphism of $\D$ which coincides with  its inverse. Thus, $\varphi$ is embeddable into the following semiflow of analytic self-maps of $\D$: $$  \varphi_t = \tau_\alpha \circ R_{\theta t} \circ \tau_\alpha, \mbox{ with } R_{\theta t} : z \longmapsto e^{it\theta}z,\quad t\geq 0$$ 

\subsection{Hardy spaces and Blaschke products}

Let $\T = \{z \in \C : \abs{z} = 1 \}$ be the unit circle endowed with the Lebesgue measure $m$. For $0<p<\infty$, we consider the Hardy space $H^p=H^p(\D)$ which consists of functions $f$ holomorphic on $\D$ satisfying
\[
\norm{f}_p= \sup_{0 \le r < 1}\left(\int_{\T} \abs{f(r\zeta)}^p\,dm(\zeta)\right)^{1/p}<\infty.
\]
Recall that for $p=2$, $H^2$ is a reproducing kernel Hilbert space whose kernel is given by 
\[
k_\lambda(z) = \frac{1}{1 - \overline{\lambda}z}, \quad \lambda, z \in \D
\] meaning that $$f(\lambda) = \scal{f}{k_\lambda}_2, \quad f \in H^2, \quad \lambda \in \D.$$
Moreover, we have $\mbox{Span}_{H^2}(k_\lambda : \lambda \in \D) = H^2$, where $\mbox{Span}_{\mathcal H}(A)$ denotes the closure in $\mathcal H$ of the subspace consisting of finite linear combinations of elements of $A$, where $A$ is a family of vectors in a Hilbert space $\mathcal H$. Define by $H_0^2$ the space of functions $f \in H^2$ such that $f(0) = 0$.

We also define $H^\infty = H^\infty(\D)$ to be the class of bounded analytic functions on $\D$, endowed with the sup norm defined by $\norm{f}_\infty = \underset{z \in \D}{\sup} \abs{f(z)}$.

We also recall that a Blaschke product is a function of the form 
\begin{equation}\label{ProdBla}
    B(z) = e^{i\beta}z^k\prod_{n \ge 1} \frac{\abs{\alpha_n}}{\alpha_n}\frac{\alpha_n - z}{1 - \overline{\alpha_n}z}, \quad z \in \D
\end{equation} where $\beta \in \R$, $k \in \N\cup\{0\}$ and $(\alpha_n)_{n \ge 1}$ is a finite or infinite sequence of $\D \backslash \{0\}$ satisfying $\sum_{n \ge 1} (1-\abs{\alpha_n}) < \infty$. Then $B$ is an inner function on $\D$.  When  $(\alpha_n)_{n \ge 1}$ is a finite   sequence of $\D$, we say that $B$ is a finite Blaschke product and one can easily check that such $B$ are continuous on the closed unit disc. We refer the reader to \cite{BlaFin} for more details about finite Blaschke products.


In the sequel we consider the finite Blaschke product associated with a finite sequence $(\alpha_n)_{1 \le n \le N}\subset \D$ defined  by 
\begin{equation}\label{BlaschkeFini}
    B(z) = \prod_{i=1}^N \frac{\alpha_i - z}{1 - \overline{\alpha_i}z}, \quad z \in \D.    
\end{equation}
We know that the equation $B(z) = \beta$ for $\beta \in \D$ has exactly $N$ solutions in $\D$, taking into account the   multiplicity  (see \cite{BlaFin}). The following lemma is the key to differentiate them.

\begin{lem}\label{lemzerodistinctEqB}
    Let $\beta \in \D \backslash B( \mbox{Zero}(B'))$. Then the equation $B(z) = \beta$ has exactly $N$ distinct solutions in $\D$. 
\end{lem}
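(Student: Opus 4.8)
The plan is to exploit the fact that $B\colon\D\to\D$ is a proper holomorphic map of degree $N$, so that (counting multiplicity) $B(z)=\beta$ always has exactly $N$ solutions in $\D$. The only way the number of \emph{distinct} solutions drops below $N$ is if some solution $z_0$ is a multiple root, i.e. $B(z_0)=\beta$ and $B'(z_0)=0$. So I would argue contrapositively: if $B(z)=\beta$ has fewer than $N$ distinct solutions, then by the pigeonhole principle some solution $z_0$ has multiplicity $\geq 2$, hence $B'(z_0)=0$, hence $z_0\in\mbox{Zero}(B')$ and $\beta=B(z_0)\in B(\mbox{Zero}(B'))$, contradicting the hypothesis $\beta\notin B(\mbox{Zero}(B'))$.

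The one point that needs a little care is the equivalence ``$z_0$ is a root of $B(z)=\beta$ of multiplicity $\geq 2$'' $\iff$ ``$B(z_0)=\beta$ and $B'(z_0)=0$''. This is just the statement that the multiplicity of $z_0$ as a zero of the holomorphic function $B-\beta$ is the order of vanishing, which is $\geq 2$ exactly when both $(B-\beta)(z_0)=0$ and $(B-\beta)'(z_0)=B'(z_0)=0$. I would state this briefly, noting that $B'$ is genuinely holomorphic on $\D$ and in fact a rational function with no poles in $\D$, so $\mbox{Zero}(B')$ is a finite set (indeed of cardinality at most $N-1$ by a degree count on $B'$), which incidentally shows $B(\mbox{Zero}(B'))$ is a finite subset of $\D$ and the hypothesis on $\beta$ excludes only finitely many values.

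The main (very mild) obstacle is simply making explicit that the degree-$N$ counting statement ``$B(z)=\beta$ has exactly $N$ solutions counted with multiplicity for every $\beta\in\D$'' — which the excerpt has already quoted from \cite{BlaFin} — combines with the multiplicity characterization to give exactly $N$ \emph{distinct} roots when no root is critical. Concretely: let $z_1,\dots,z_r$ be the distinct solutions with multiplicities $m_1,\dots,m_r$, so $\sum_{j=1}^r m_j=N$; if $\beta\notin B(\mbox{Zero}(B'))$ then no $z_j$ is a critical point of $B$, so every $m_j=1$, whence $r=N$. Conversely $r\leq N$ always, so $r=N$ is the maximum and we are done.

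I expect no real difficulty here; the lemma is essentially a bookkeeping consequence of properness of finite Blaschke products plus the elementary fact that multiple roots of $B-\beta$ are precisely the critical points of $B$ lying in the fiber over $\beta$. The proof will be short.
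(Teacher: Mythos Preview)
Your proposal is correct and follows essentially the same approach as the paper: both argue that a repeated root $z_0$ of $B(z)=\beta$ forces $B'(z_0)=0$, hence $\beta\in B(\mbox{Zero}(B'))$. The only cosmetic difference is that the paper clears denominators and works with the polynomial $P-\beta Q$ (using the quotient rule to see that a common zero of $P-\beta Q$ and $P'-\beta Q'$ gives $B'=0$), whereas you work directly with the holomorphic function $B-\beta$.
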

\begin{proof}
    Let $\beta \in \D$. Then $B(z) = \beta$ is equivalent to a polynomial  equation  of the form
    \[ P(z)-\beta Q(z)=0\mbox{ with }P(z)= \prod_{i=1}^N (\alpha_i - z)  \mbox{ and }Q(z)=\prod_{i=1}^N (1 - \overline{\alpha_i}z).\]
    Since $(-1)^{N}\left(1 - \beta \prod_{i=1}^N \overline{\alpha_i}\right) \neq 0$ and since $B$ maps $\D$ to $\D$, $\T$ to $\T$ and 
    $\{z\in\C:|z|>1\}$ to itself, there are N solutions in $\D$. It remains to prove that for suitable $\beta$ the solutions are distinct. 
    
    Note that $$B'(z) = \frac{P'(z)Q(z) - P(z)Q'(z)}{Q(z)^2}$$ and thus 
    $$P(z) - \beta Q(z) = 0 \text{~and~} P'(z) - \beta Q'(z) = 0\Longrightarrow B'(z)=0.$$ 
   It follows that  for $\beta \in \D \backslash B( \mbox{Zero}(B'))$, the equation $B(z) = \beta$ has exactly $N$ distinct solutions in $\D$.
\end{proof}

We also recall the well known and very useful Frostman's theorem \cite{espMod} whose assertion is the following. Let $\theta$ be an arbitrary inner function.  Then there exists a set of measure zero with respect to the area measure (even a set of capacity  zero) $\Omega \subset \D$ such that for every $\lambda \in \D \backslash \Omega$, the so-called \textit{Frostman's transform} $$\theta_\lambda := \tau_\lambda \circ \theta = \frac{\lambda - \theta}{1 - \overline{\lambda}\theta}$$ is a Blaschke product with simple zeros.  Here recall that $\tau_\lambda$ is the usual automorphism of $\D$ defined by $\tau_\lambda(z) = \frac{\lambda - z}{1 - \overline{\lambda}z}$ for $z \in \D$.

\section{Embedding results}\label{embedding}

\subsection{Composition operators on \texorpdfstring{$H^2$}{H2}}

First of all, recall that the main goal of this paper is to describe classes of operators which can be embedded into a semigroup of operators. The isometric operators are of particular interest  due to  the necessary and sufficient condition in Theorem~ \ref{PlongIsoTanja} on Hilbert spaces. The choice of composition operators on $H^2$ is relevant since  the isometric ones as well as the ones which are similar to isometries  are fully characterized and, moreover, this class is quite rich.  
We define the \textit{composition operator} $C_\varphi$ with symbol $\varphi$ on $H^2$ by $$C_\varphi : f \longmapsto f \circ \varphi, \quad f \in H^2.$$ This operator is well-defined and bounded on $H^2$. See \cite{CompOpShap} for further information about those operators on $H^2$.

Moreover, $C_\varphi$ is an isometry on $H^2$ if and only if $\varphi$ is inner and $\varphi(0) = 0$. In \cite{SimIsoFB}, F. Bayart showed that $C_\varphi$ is similar to an isometry on $H^2$ if and only if $\varphi$ is inner and there exists $\alpha \in \D$ such that $\varphi(\alpha) = \alpha$. In other words, $C_\varphi$ is similar to an isometry on $H^2$ if and only if its symbol $\varphi$ is an elliptic inner function.

 In \cite{AsymBehCompOp}, W. Arendt, I. Chalendar, M. Kumar and S. Srivastava  showed the following results.
\begin{itemize}[label = \textbullet]
    \item On the Bergman space $\mathcal{A}^2$ (and even on its weighted versions $\mathcal{A}^2_\beta$ for $\beta > -1)$, $C_\varphi$ is similar to an isometry if and only if $\varphi$ is an elliptic automorphism of $\D$. Thus, we will find a natural embedding according to \eqref{AutElliptic} for $C_\varphi$, described later by Remark \ref{SGSF}.
    \item On the classical Dirichlet space $\mathcal{D}$, $C_\varphi$ is similar to an isometry if and only if $\varphi$ is a univalent full map with a fixed point in $\D$ and the counting function $n_\varphi$ associated to $\varphi$ is essentially radial (see \cite[Section 6]{AsymBehCompOp}). Note that the existing criteria for the boundedness of $C_\varphi$ is not that explicit  and so the similarity to an isometry is much less easy to handle. 
\end{itemize}

From now on, the space on which we study our operators are  defined on the Hardy space $H^2$. Let $\varphi : \D \to \D$ be analytic. 
\begin{rem}\label{SGSF}
    Observe that if $\varphi \hookrightarrow (\varphi_t)_{t \ge 0}$ where $(\varphi_t)_{t \ge 0}$ is a semiflow of analytic self-maps of $\D$, then $C_\varphi \hookrightarrow (C_{\varphi_t})_{t \ge 0}$ where $ (C_{\varphi_t})_{t \ge 0}$ is a $C_0$-semigroup on $H^2$. 

Conversely if $C_\varphi \hookrightarrow (T_t)_{t \ge 0}$ where $(T_t)_{t \ge 0}$ is a $C_0$-semigroup of composition operators on $H^2$, then applying $T_t$ to $e_1(z) := z$ and using the fact that the convergence in $H^2$ implies the pointwise convergence, we get the existence of $(\varphi_t)_{t \ge 0}$ a semiflow of analytic self-maps of $\D$ such that $T_t = C_{\varphi_t}$.  
\end{rem}


With the reproducing kernel Hilbert space property of $H^2$, we can give the following first sufficient condition of embedding for isometric composition operators. This is a key to understand the strategy of the proof of the main result. 

\begin{lem}\label{exi2preim}
    Let $\varphi : \D \rightarrow \D$ be an inner function such that $\varphi(0) = 0$. Assume that there exists a sequence $(z_k)_{k\geq 0}$ of distinct points   in $\D $ such that each $z_k$  has at least two preimages by $\varphi$. Then $C_\varphi$ is embeddable into a $C_0$-semigroup on $H^2$.
\end{lem}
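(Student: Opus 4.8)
The plan is to reduce everything to Eisner's criterion, Theorem~\ref{PlongIsoTanja}. Since $\varphi$ is inner with $\varphi(0)=0$, the operator $C_\varphi$ is an isometry on $H^2$, so it is enough to prove that $\codim(C_\varphi H^2)=\infty$; the alternative ``$C_\varphi$ unitary'' need not even be discussed, since it is excluded as soon as $\codim(C_\varphi H^2)=\infty$. Note also that $C_\varphi H^2$ is closed, being the range of an isometry, so $\codim(C_\varphi H^2)=\dim\bigl((C_\varphi H^2)^\perp\bigr)$.

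The key step is a reproducing-kernel identity. For each $k\geq 0$ choose two distinct preimages $a_k\neq b_k$ in $\D$ with $\varphi(a_k)=\varphi(b_k)=z_k$, and set $u_k:=k_{a_k}-k_{b_k}$. Then for every $f\in H^2$,
\[
\scal{C_\varphi f}{u_k}_2=(f\circ\varphi)(a_k)-(f\circ\varphi)(b_k)=f(z_k)-f(z_k)=0,
\]
so that $u_k\in (C_\varphi H^2)^\perp$ for all $k$.

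It remains to check that $\{u_k:k\geq 0\}$ is an infinite linearly independent family. Because the $z_k$ are pairwise distinct and $\varphi$ is a function, the fibres $\varphi^{-1}(\{z_k\})$ are pairwise disjoint, hence the points $a_0,b_0,a_1,b_1,\dots$ are all distinct. A standard Vandermonde argument, testing against the monomials $z^n$ and using $\scal{z^n}{k_\lambda}_2=\lambda^n$, shows that reproducing kernels at distinct points of $\D$ are linearly independent; consequently any finite relation $\sum_k c_k(k_{a_k}-k_{b_k})=0$ forces all $c_k=0$. Thus $(C_\varphi H^2)^\perp$ is infinite-dimensional, i.e. $\codim(C_\varphi H^2)=\infty$, and Theorem~\ref{PlongIsoTanja} gives $C_\varphi\hookrightarrow(T_t)_{t\geq 0}$ for some $C_0$-semigroup on $H^2$.

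The argument is essentially routine once the kernel identity is available; the only point that needs a little care is the linear independence of the differences $u_k$, which is precisely where the distinctness of the $z_k$ is used (it guarantees disjoint fibres, hence genuinely distinct evaluation points). No explicit semigroup is produced here — Eisner's abstract criterion does the work — which is consistent with this lemma being only a model case for the strategy of the main theorem.
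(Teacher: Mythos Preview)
Your proof is correct and follows essentially the same route as the paper: build differences of reproducing kernels at two preimages of each $z_k$, check via the reproducing property that they lie in $(C_\varphi H^2)^\perp$, and apply Theorem~\ref{PlongIsoTanja}. You in fact supply more detail than the paper on the linear independence of the $u_k$ (via disjointness of the fibres and independence of kernels at distinct points), where the paper simply asserts it.
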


\begin{proof}
Since $\varphi$ is an inner function such that $\varphi(0) = 0$, it comes that $C_\varphi$ is an isometry on $H^2$. Moreover, by hypothesis, for $K \in \N$, there exist $(w_1, ..., w_K)$ and $(w_1', ..., w_K') \in \D^K$ such that $$\forall 1 \le i \le K, \quad w_i \neq w_i' \text{~and~} \varphi(w_i) = \varphi(w_i') = z_i.$$ For each $1 \le i \le K$, define the function $f_i \in H^2$ by $f_i = k_{w_i} - k_{w_i'}$. Then, for all $f \in H^2$ and for every $1 \le i \le K$, $$\scal{C_\varphi f}{f_i}_2 = \scal{f \circ \varphi}{k_{w_i}}_2 - \scal{f \circ \varphi}{k_{w_i'}}_2 = f \circ \varphi(w_i) - f \circ \varphi(w_i') = 0.$$ Theorefore, we obtain that $(f_i)_{1 \le i \le K} \subset \mbox{Im}(C_\varphi)^\perp$. Since $(f_i)_{1 \le i \le K}$ is  a set of $K$ linearly independent functions, where $K$ is arbitrary large, we get that \begin{equation*}
    \codim(\mbox{Im}(C_\varphi)) = \dim(\mbox{Im}(C_\varphi)^\perp) = \infty. 
\end{equation*}
Finally, we conclude with Theorem \ref{PlongIsoTanja} and then $C_\varphi$ is embeddable into a $C_0$-semigroup on $H^2$.
\end{proof}

Recall that if $\varphi$ is an elliptic automorphism, as in Subsection \ref{AnalyticSF}, then $C_\varphi$ is embeddable into a semigroup of composition operators on $H^2$ (see \eqref{AutElliptic} and Remark \ref{SGSF}). We refer the reader to \cite{BraConDia} for the remaining  automorphism cases, for which there exist natural embeddings thanks to Remark \ref{SGSF}.

\begin{theo}
    Every composition operator  $C_\varphi$ which is similar to an isometry on $H^2$ is embeddable into a $C_0$-semigroup $(T_t)_{t\geq 0}$ on $H^2$, which is not a semigroup of composition operators, unless $\varphi$ is an automorphism.
\end{theo}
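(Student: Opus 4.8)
The plan is to combine F.~Bayart's description of the symbols whose composition operator is similar to an isometry with Theorem~\ref{PlongIsoTanja}, Lemma~\ref{exi2preim} and Remark~\ref{SGSF}. By Bayart's theorem, $C_\varphi$ is similar to an isometry on $H^2$ exactly when $\varphi$ is inner and admits a fixed point $\alpha\in\D$; fix such an $\alpha$. The first step is a normalization: set $\psi=\tau_\alpha\circ\varphi\circ\tau_\alpha$. Since $\tau_\alpha\circ\tau_\alpha=Id$ we have $C_\psi=C_{\tau_\alpha}C_\varphi C_{\tau_\alpha}$ with $C_{\tau_\alpha}$ invertible, so $C_\varphi$ and $C_\psi$ are similar; moreover $\psi$ is inner, $\psi(0)=0$, and $\psi$ is an automorphism of $\D$ if and only if $\varphi$ is. As $C_{\tau_\alpha}C_{\sigma_t}C_{\tau_\alpha}=C_{\tau_\alpha\circ\sigma_t\circ\tau_\alpha}$ and conjugation by the fixed invertible operator $C_{\tau_\alpha}$ sends $C_0$-semigroups to $C_0$-semigroups, $C_\varphi$ is embeddable into a $C_0$-semigroup (resp.\ into a $C_0$-semigroup of composition operators) if and only if $C_\psi$ is. Hence I may and do assume from now on that $\varphi$ is inner with $\varphi(0)=0$, so that $C_\varphi$ is an isometry on $H^2$.

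If $\varphi$ is an automorphism, then $\varphi=R_\theta$ for some $\theta\in\R$ and $C_\varphi\hookrightarrow(C_{R_{\theta t}})_{t\ge0}$ is a $C_0$-semigroup of composition operators, by \eqref{AutElliptic} and Remark~\ref{SGSF}; this is the excluded case and nothing further is required. So assume $\varphi$ is not an automorphism. I claim that then every $\lambda\in\D$ outside a set of capacity zero has at least two distinct preimages under $\varphi$. Indeed, let $\Omega\subset\D$ be the exceptional set from Frostman's theorem; for $\lambda\in\D\setminus\Omega$, the Frostman transform $\theta_\lambda=\tau_\lambda\circ\varphi$ is a Blaschke product with simple zeros, and its zero set is exactly $\varphi^{-1}(\lambda)$. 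This zero set has at least two elements: a Blaschke product with at most one zero is a unimodular constant or a disc automorphism, which would force $\varphi=\tau_\lambda\circ\theta_\lambda$ to be constant or an automorphism, both excluded. (When $\varphi$ is a finite Blaschke product one may alternatively quote Lemma~\ref{lemzerodistinctEqB}, the degree being at least $2$.) Choosing a sequence $(z_k)_{k\ge0}$ of distinct points of $\D\setminus\Omega$, Lemma~\ref{exi2preim} applies and gives a $C_0$-semigroup $(T_t)_{t\ge0}$ on $H^2$ with $T_1=C_\varphi$.

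It remains to show that this semigroup is not a semigroup of composition operators; in fact I will show that $C_\varphi$ embeds into \emph{no} $C_0$-semigroup of composition operators. Suppose toward a contradiction that $C_\varphi\hookrightarrow(T_t)_{t\ge0}$ with each $T_t$ a composition operator. By Remark~\ref{SGSF} there is a semiflow $(\varphi_t)_{t\ge0}$ of analytic self-maps of $\D$ with $T_t=C_{\varphi_t}$, and evaluating at $e_1(z)=z$ gives $\varphi_1=\varphi$. Every map in such a semiflow is one-to-one (Subsection~\ref{AnalyticSF}); in particular $\varphi_{1/2}$ is injective, hence so is $\varphi=\varphi_1=\varphi_{1/2}\circ\varphi_{1/2}$. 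But the previous paragraph exhibited points of $\D$ with two distinct $\varphi$-preimages, so $\varphi$ is not injective, a contradiction. Transporting the conclusion back through the similarity of the first step completes the proof.

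I expect the crux to be the embeddability of $C_\varphi$ for an arbitrary elliptic inner non-automorphism symbol: when $\varphi$ is an infinite Blaschke product or has a nontrivial singular inner part, Lemma~\ref{lemzerodistinctEqB} no longer applies, and Frostman's theorem is what produces the infinitely many points of $\D$ with at least two $\varphi$-preimages needed to invoke Lemma~\ref{exi2preim}. The normalization to $\varphi(0)=0$ and the ``no composition semigroup'' argument are then comparatively routine.
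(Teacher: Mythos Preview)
Your proof is correct and follows the same overall strategy as the paper: reduce via conjugation by $C_{\tau_\alpha}$ to an inner symbol fixing $0$, dispose of the automorphism case by the rotation semiflow, and in the non-automorphism case show $\codim(\mbox{Im}(C_\psi))=\infty$ so that Theorem~\ref{PlongIsoTanja} applies; the impossibility of a composition-operator semigroup is obtained in both proofs from the non-injectivity of the symbol together with the univalence of semiflow maps.

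The one real difference is how the infinite codimension is produced. The paper splits into two cases: for a finite Blaschke product it invokes Lemma~\ref{lemzerodistinctEqB} and then Lemma~\ref{exi2preim}; for the remaining inner symbols it fixes a \emph{single} Frostman parameter $\gamma$, obtains an infinite Blaschke product $B=\tau_\gamma\circ\psi$ with simple zeros $(w_k)_{k\ge1}$, notes the equality $C_\psi H^2=C_B H^2$, and exhibits the infinite independent family $(k_{w_i}-k_{w_j})_{i\neq j}$ in $\mbox{Im}(C_\psi)^\perp$. You instead apply Frostman's theorem at \emph{infinitely many} parameters $\lambda$, observing that each $\tau_\lambda\circ\varphi$, being a non-automorphic Blaschke product with simple zeros, must have at least two zeros; this directly feeds Lemma~\ref{exi2preim} without any case split and without the identity $C_\psi H^2=C_B H^2$. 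Both arguments are short; yours is more uniform, while the paper's version in the non-finite case makes the orthogonal vectors entirely explicit from a single Blaschke product.
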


\begin{proof}
	 Let $\varphi : \D \to \D$ be an analytic function. Then $C_\varphi$ is similar to an isometry on $H^2$ if and only if $\varphi$ is inner and there exists $\alpha \in \D$ such that $\varphi(\alpha) = \alpha$.  
    The case when $\varphi$ is an elliptic automorphism is already done thanks to Remark \ref{SGSF}. From now on, assume that $\varphi$ is a  non automorphic inner function  with a fixed point $\alpha\in \D $.  
    
    Let us first remark  that $\varphi$ is not injective. Indeed, 
        \begin{itemize}[label = \textbullet]
            \item if $\varphi$ is a Blaschke product as \eqref{ProdBla}, then $\varphi$ is not injective since $0$ has at least two preimages; 
            \item if $\varphi$ is not a Blaschke product, then according to  Frostman's theorem, the map $\tau_a \circ \varphi = \frac{a - \varphi}{1 - \overline{a}\varphi}$ is a Blaschke product $B$ with simple zeros for almost all $a \in \D$. Therefore, $\varphi = \tau_a^{-1}(B)$ is not injective.
        \end{itemize}
       In that case, there is no semiflow of analytic self-maps of $\D$ in which $\varphi$ is embeddable. Therefore, from Remark \ref{SGSF}, $C_\varphi$ is not embeddable into a $C_0$-semigroup of composition operators on $H^2$.
    For $\varphi(\alpha) = \alpha$, let us consider the application defined as follows 
    \[\psi := \tau_\alpha \circ \varphi \circ \tau_\alpha.\]
     Then $\psi$ is an inner function such that $\psi(0) = 0$. Thus $C_\psi$ is an isometry of $H^2$ and we get $C_\varphi = C_{\tau_\alpha} \circ C_\psi \circ C_{\tau_\alpha}$. Since $C_{\tau_\alpha}$ is an isomorphism of $H^2$, it remains to show that $C_\psi$ is embeddable into a $C_0$-semigroup on $H^2$.  
 \begin{itemize}[label=\textbullet]
 	\item If $\psi$ is a finite Blaschke product $B$, the conclusion follows  from Lemma \ref{lemzerodistinctEqB} and Lemma~\ref{exi2preim}, taking $(z_k)_{k\geq 0}$ in $\D\backslash B(\mbox{Zero}(B'))$. 
 	\item If $\psi$ is inner but not a finite Blaschke product,  according to Frostman's theorem, the map $\tau_\gamma \circ \psi =: B$ is a Blaschke product with simple zeros for almost all $\gamma \in \D$.  In that case, we have 
 	\begin{align}\label{ImCpsiCB}
 		C_\psi H^2 & = \{ f \circ \psi : f \in H^2 \} \\ \nonumber & = \{ (f \circ \tau_\gamma) \circ (\tau_\gamma \circ \psi) : f \in H^2 \} \\ \nonumber & = \{ g \circ B : g \in H^2 \} = C_B H^2.
 	\end{align}
 	Denote by $(w_k)_{k \ge 1}$ the sequence of simple zeros of $B$. Considering the  sequence  $(k_{w_i} - k_{w_j})_{i,j \ge 1, i \neq j}$ of linearly independent functions of $H^2$, we deduce from \eqref{ImCpsiCB} that 
    $$\codim(\mbox{Im}(C_\psi)) = \dim(\mbox{Im}(C_B)^\perp) = \infty.$$  
The embedding of $C_\psi$ follows  from Theorem \ref{PlongIsoTanja}. 
 \end{itemize}    
      Finally, we conclude that $C_\varphi$ is embeddable into the $C_0$-semigroup $(C_{\tau_\alpha}T_tC_{\tau_\alpha})_{t \ge 0}$ on $H^2$, where $C_\psi$ is embedded into a  $C_0$-semigroup denoted by $(T_t)_{t \ge 0}$ on $H^2$.
 \end{proof}
In order to describe the semigroup in which $C_\varphi$ is embeddable, we need the following lemma which appears in \cite[Lemma 5]{NORDGREN}. For the sake of completeness,  we include a slightly different proof.
\begin{lem}\label{lem:inter}
Let $\psi$ be an inner function such that $\psi(0)=0$ and such that $\psi$ is not an automorphism. Then $\bigcap_{n\geq 0}C_\psi^n H^2_0=\{0\}$ and  thus  $\bigcap_{n\geq 0}C_\psi^n H^2=\C \mathds 1$, where $\mathds 1$ stands for the constant function equal to $1$. 
\end{lem}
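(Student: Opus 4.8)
The plan is to show $\bigcap_{n\geq 0}C_\psi^n H^2_0=\{0\}$ first, since the statement about $H^2$ follows immediately: if $f\in\bigcap_n C_\psi^n H^2$, write $f=f(0)\mathds{1}+g$ with $g\in H^2_0$; since $C_\psi\mathds{1}=\mathds{1}$ (because $\psi$ maps $\D$ to $\D$) and $C_\psi$ preserves $H^2_0$ (because $\psi(0)=0$), the class of $f$ in $H^2/\C\mathds{1}\cong H^2_0$ lies in $\bigcap_n C_\psi^n H^2_0$, forcing $g=0$. So everything reduces to the first intersection.

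For the first intersection I would argue as follows. Note that $\mbox{Im}(C_\psi)=C_\psi H^2=\{f\circ\psi: f\in H^2\}$, and by Beurling's theorem this is exactly the subspace of $H^2$ of functions of the form $\psi\cdot h$ for... no — more precisely one should use that since $\psi$ is inner, $C_\psi$ is an isometry and $C_\psi H^2_0=\psi H^2$ (indeed $f\circ\psi$ for $f(0)=0$ means $f(w)=w\,g(w)$ for some $g\in H^2$, so $f\circ\psi=\psi\cdot(g\circ\psi)$; conversely $\psi\cdot k$ for $k\in H^2$ need not be of the form $g\circ\psi$, so one only gets $C_\psi H^2_0\subseteq \psi H^2$). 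Iterating, $C_\psi^n H^2_0\subseteq \psi^n H^2$ for all $n$, hence $\bigcap_n C_\psi^n H^2_0\subseteq \bigcap_n \psi^n H^2$. Now if $f\in\bigcap_n\psi^n H^2$, then for each $n$ we may write $f=\psi^n g_n$ with $g_n\in H^2$, and since $\psi$ is inner, $\|g_n\|_2=\|f\|_2$ for all $n$ (multiplication by an inner function is isometric). Taking boundary values on $\T$, $|f|=|g_n|$ a.e., while $f$ vanishes to order at least $n$ at $0$ in the sense of the Taylor expansion... Actually the cleanest route: $\psi^n\to 0$ locally uniformly on $\D$ because $\psi$ is a non-automorphic inner function with $\psi(0)=0$, so by the Schwarz lemma $|\psi(z)|<|z|$ for $z\neq0$ — wait, $|\psi(z)|\le|z|$ with equality only if $\psi$ is a rotation, which is an automorphism, excluded — hence $|\psi(z)|<|z|$ strictly for $0<|z|<1$, and then $|\psi^n(z)|\le|z|\cdot|\psi(z)|^{n-1}\to 0$... more carefully $|\psi^{[n]}|$ iterated vs. $\psi^n$ as a power; here $\psi^n$ means the $n$-th power, and $|\psi(z)^n|=|\psi(z)|^n\le|z|^n\to0$. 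So for fixed $z\in\D$, $f(z)=\psi(z)^n g_n(z)$, and since $\{g_n\}$ is bounded in $H^2$ it is locally bounded, so $|f(z)|=|\psi(z)|^n|g_n(z)|\to 0$; thus $f\equiv 0$.

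The main obstacle is the inclusion $C_\psi^n H^2_0\subseteq \psi^n H^2$ and making sure one does not need the reverse inclusion: we only need an upper bound on the intersection, so $C_\psi H^2_0\subseteq \psi H^2$ suffices, and this I would establish by the elementary observation above that $f(0)=0$ implies $f(w)=wg(w)$, whence $(C_\psi f)(z)=f(\psi(z))=\psi(z)\,g(\psi(z))=\psi(z)\,(C_\psi g)(z)\in\psi H^2$. Iterating needs $\psi H^2$ to be $C_\psi$-invariant up to a power of $\psi$: from $C_\psi(\psi h)=\psi\cdot C_\psi(h)$ one gets $C_\psi(\psi H^2)\subseteq \psi H^2$, and combined with the base case, $C_\psi^n H^2_0\subseteq C_\psi^{n-1}(\psi H^2)\subseteq\cdots\subseteq\psi H^2$; to get $\psi^n H^2$ one instead applies the identity $C_\psi^n H^2_0=C_\psi^{n-1}(C_\psi H^2_0)\subseteq C_\psi^{n-1}(\psi H^2)$ and uses $C_\psi(\psi h)=\psi\,C_\psi h$ repeatedly together with $C_\psi H^2_0\subseteq \psi H^2$ at each stage, yielding $C_\psi^n H^2_0\subseteq \psi\cdot\psi\cdots\psi\,H^2=\psi^n H^2$. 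The only other point requiring care is the strict Schwarz inequality $|\psi(z)|<|z|$ for $z\neq 0$: this holds because $\psi(0)=0$ and $\psi$ is not a rotation of $\D$ (rotations being automorphisms), which is exactly the standing hypothesis.
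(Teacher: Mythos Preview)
Your reduction of the $H^2$ statement to the $H_0^2$ statement is correct and matches the paper. The gap is in the iteration step: the identity you invoke, $C_\psi(\psi h)=\psi\cdot C_\psi h$, is false. By definition
\[
C_\psi(\psi h)(z)=(\psi h)(\psi(z))=\psi(\psi(z))\,h(\psi(z))=\psi^{[2]}(z)\,(C_\psi h)(z),
\]
so the factor produced is the \emph{iterate} $\psi^{[2]}=\psi\circ\psi$, not the power $\psi^2$. As a consequence the inclusion $C_\psi^{\,n} H_0^2\subseteq \psi^{\,n} H^2$ breaks down already for $n=2$. Take $\psi(z)=z\,\frac{z-1/2}{1-z/2}$ and $f(w)=w\in H_0^2$, so that $C_\psi^{\,2} f=\psi^{[2]}\in C_\psi^{\,2} H_0^2$; since $\psi(z)\sim -z/2$ near $0$, the quotient $\psi^{[2]}/\psi^{\,2}=\frac{1}{\psi}\cdot\frac{\psi-1/2}{1-\psi/2}$ has a simple pole at $0$, and therefore $\psi^{[2]}\notin \psi^{\,2} H^2$.

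The repair is immediate once you keep the iterate throughout: writing $g=f_n\circ\psi^{[n]}$ with $f_n\in H_0^2$ and $f_n(w)=w\,g_n(w)$ gives $g=\psi^{[n]}\cdot(g_n\circ\psi^{[n]})$ with $\norm{g_n\circ\psi^{[n]}}_2=\norm{g_n}_2=\norm{f_n}_2=\norm{g}_2$, and your reproducing-kernel bound yields $\abs{g(z)}\le \abs{\psi^{[n]}(z)}\,\norm{g}_2\,\norm{k_z}_2$. What is now required is $\psi^{[n]}(z)\to 0$, not $\psi(z)^n\to 0$. This follows from the strict Schwarz inequality you already noted (if $\abs{\psi^{[n]}(z)}\downarrow r>0$, any subsequential limit $w_0$ of $\psi^{[n]}(z)$ would satisfy $\abs{\psi(w_0)}=\abs{w_0}>0$, contradicting strictness), or equivalently because $0$ is the Denjoy--Wolff point of $\psi$. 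With this correction the argument coincides with the paper's proof.
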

\begin{proof}
	Let $g \in \bigcap_{n\geq 0}C_\psi^n H_0^2$. Then for each $n \ge 1$, there exists $f_n \in H_0^2$ such that $g(z) = f_n(\psi^{[n]}(z))$ for every $z \in \D$ with $\psi^{[n]} = \psi \circ \cdots \circ \psi$ ($n$ times). Morever $\norm{g}_2 = \norm{f_n}_2$ since $C_\psi$ is isometric. Note that if $g \neq 0$, then there exists  $z_0 \in \D$, $z_0 \neq 0$ such that $\abs{g(z_0)} > 0$. 
	Since $f_n \in H_0^2$, $f_n(0) = 0$, there exists $g_n \in H^2$ such that $f_n(z) = zg_n(z)$ for every $z \in \D$, with $\norm{g_n}_2 = \norm{g}_2$. Finally, we get that 
	\begin{align*}
		\abs{f_n(\psi^{[n]}(z_0))} & = \abs{\psi^{[n]}(z_0)}\abs{g_n(\psi^{[n]}(z_0))} \\ & = \abs{\psi^{[n]}(z_0)}\abs{\scal{g_n}{k_{\psi^{[n]}(z_0)}}_2} \\ & \le \abs{\psi^{[n]}(z_0)}\norm{g}_2 \frac{1}{\sqrt{1 - \abs{\psi^{[n]}(z_0)}^2}}.
	\end{align*}
Since $0$ is the Denjoy-Wolff point of $\psi$, we get  $\psi^{[n]}(z_0) \underset{n \rightarrow \infty}{\longrightarrow} 0$.
Then $\abs{f_n(\psi^{[n]}(z_0))} \underset{n \rightarrow \infty}{\longrightarrow} 0$ and we get $g(z_0) = 0$, a contradiction. Consequently, $\bigcap_{n\geq 0}C_\psi^n H^2_0=\{0\}$.
  
The second assertion of the lemma follows from the fact that if $f \in \bigcap_{n \ge 0} C_\psi^n H^2$ then $f - f(0)\mathds{1} \in \bigcap_{n \ge 0} C_\psi^n H_0^2$. Indeed, in that case, for each $n \ge 1$, there exists $h_n \in H^2$ such that $f = C_\psi^n h_n$. Thus $f - f(0)\mathds{1} = C_\psi^n(h_n - f(0)\mathds{1})$. Since $f(0) = h_n(\psi^{[n]}(0)) = h_n(0)$, we have $f - f(0)\mathds{1} = C_\psi^n(h_n - h_n(0)\mathds{1})$ with $h_n - h_n(0)\mathds{1} \in H_0^2$. Therefore, $f - f(0)\mathds{1} \in  \bigcap_{n \ge 0} C_\psi^n H_0^2$. 
\end{proof}

\begin{coro}\label{cor:form}
Let $\varphi$ be an inner function  with a fixed  point $\alpha\in\D$ and such that $\varphi$ is not an automorphism. Then $C_\varphi$ on $H^2$ is  embeddable into the $C_0$-semigroup \[\left( M_{e^{it\theta}} \oplus C_{\tau_\alpha}U^*S_tU C_{\tau_\alpha}\right)_{t \ge 0},\]
with respect to the decomposition  $\C\mathds 1 \oplus H_0^2$ 
 where $U : H_0^2 \rightarrow L^2(\R_+, \mbox{Im}(C_\psi)^\perp)$ is unitary, $\theta \in \R$ and $(S_t)_{t \ge 0}$ is the right semigroup on $L^2(\R_+, \mbox{Im}(C_\psi)^\perp)$ defined by \[(S_t f)(s)=\begin{cases}
 	f(s-t) & s-t\geq 0\\
 	0 & s-t<0 
 \end{cases}. \]
\end{coro}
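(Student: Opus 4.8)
The plan is to reduce, exactly as in the proof of the preceding theorem, to the model function $\psi := \tau_\alpha\circ\varphi\circ\tau_\alpha$, which is inner with $\psi(0)=0$ and non-automorphic, and to write $C_\varphi = C_{\tau_\alpha}\,C_\psi\,C_{\tau_\alpha}$, where $C_{\tau_\alpha}$ is a bounded involution of $H^2$ (since $\tau_\alpha\circ\tau_\alpha=\mathrm{Id}$) with $C_{\tau_\alpha}\mathds 1=\mathds 1$. Conjugation by a bounded invertible operator sends $C_0$-semigroups to $C_0$-semigroups and the value at $t=1$ to its conjugate, and $C_{\tau_\alpha}$ fixes the constants; hence it suffices to exhibit a $C_0$-semigroup on $H^2$ of the form $(M_{e^{it\theta}}\oplus U^*S_tU)_{t\ge 0}$, relative to $H^2=\C\mathds 1\oplus H_0^2$, whose value at $t=1$ is $C_\psi$. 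Conjugating this semigroup by $C_{\tau_\alpha}$ then produces the stated semigroup, the $\C\mathds 1$-block being unaffected because $C_{\tau_\alpha}\mathds 1=\mathds 1$ (this also accounts for the mild abuse in the statement, since $C_{\tau_\alpha}$ does not preserve $H_0^2$: the asserted family should be read as the $C_{\tau_\alpha}$-conjugate of $(M_{e^{it\theta}}\oplus U^*S_tU)_{t\ge 0}$).

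First I would observe that $\C\mathds 1\oplus H_0^2$ reduces $C_\psi$: we have $C_\psi\mathds 1=\mathds 1$, and $f(0)=0$ forces $(f\circ\psi)(0)=f(\psi(0))=f(0)=0$, so $H_0^2$ is $C_\psi$-invariant and $C_\psi=\mathrm{Id}_{\C\mathds 1}\oplus V$ with $V:=C_\psi|_{H_0^2}$ an isometry of $H_0^2$. The heart of the argument is that $V$ is a \emph{pure} unilateral shift of countably infinite multiplicity. Purity is precisely Lemma~\ref{lem:inter}: the unitary part of the Wold decomposition of $V$ is $\bigcap_{n\ge 0}V^nH_0^2=\bigcap_{n\ge 0}C_\psi^nH_0^2=\{0\}$. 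For the multiplicity, note $C_\psi H^2=\C\mathds 1\oplus C_\psi H_0^2$ (if $g=C_\psi h$ then $h(0)=g(0)=0$), whence the wandering subspace $W:=H_0^2\ominus VH_0^2$ equals $\mbox{Im}(C_\psi)^\perp$; its dimension is $\codim(\mbox{Im}(C_\psi))$, which is infinite by the non-injectivity argument in the proof of the preceding theorem, hence equal to $\aleph_0$ by separability of $H^2$. So $V$ is unitarily equivalent to the unilateral shift of multiplicity $\aleph_0$.

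Next I would identify $S_1$ on $L^2(\R_+,W)$ with $W=\mbox{Im}(C_\psi)^\perp$: its range is $L^2([1,\infty),W)$, so its wandering subspace is $L^2([0,1),W)$, of dimension $\aleph_0\cdot\dim W=\aleph_0$, and $\bigcap_{n\ge 0}S_1^nL^2(\R_+,W)=\bigcap_n L^2([n,\infty),W)=\{0\}$; hence $S_1$ is also a pure unilateral shift of multiplicity $\aleph_0$. By the uniqueness in Wold's theorem (two pure unilateral shifts with wandering subspaces of the same dimension are unitarily equivalent, via any unitary between those subspaces extended to intertwine the shifts), there is a unitary $U:H_0^2\to L^2(\R_+,\mbox{Im}(C_\psi)^\perp)$ with $UV=S_1U$, i.e. $V=U^*S_1U$. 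Since $(S_t)_{t\ge 0}$ is a $C_0$-semigroup, so is $(U^*S_tU)_{t\ge 0}$, and therefore so is $(M_{e^{it\theta}}\oplus U^*S_tU)_{t\ge 0}$; choosing $\theta$ with $e^{i\theta}=1$ (e.g. $\theta=0$), its value at $t=1$ is $\mathrm{Id}_{\C\mathds 1}\oplus V=C_\psi$. Conjugating by $C_{\tau_\alpha}$ yields a $C_0$-semigroup with value $C_\varphi$ at $t=1$, namely the one in the statement.

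The only step needing care is the verification of the hypotheses of Wold's theorem: that $V=C_\psi|_{H_0^2}$ is pure — this is exactly Lemma~\ref{lem:inter}, and is where the assumption that $\psi$ is not an automorphism is used — and that $V$ and $S_1$ have the same multiplicity $\aleph_0$, which combines $\codim(\mbox{Im}(C_\psi))=\infty$ (non-injectivity of the non-automorphic inner $\psi$, as in the previous proof, producing infinitely many independent functions $k_w-k_{w'}$ in $\mbox{Im}(C_\psi)^\perp$) with the separability of $H^2$. The reduction by the involution $C_{\tau_\alpha}$, the strong continuity of $(U^*S_tU)$, and the trivial action on $\C\mathds 1$ are all routine.
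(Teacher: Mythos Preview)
Your proof is correct and follows essentially the same route as the paper: reduce to $\psi=\tau_\alpha\circ\varphi\circ\tau_\alpha$ via $C_\varphi=C_{\tau_\alpha}C_\psi C_{\tau_\alpha}$, apply the Wold decomposition to the isometry $C_\psi$ using Lemma~\ref{lem:inter} to identify the unitary part as $\C\mathds 1$ and the shift part as $C_\psi|_{H_0^2}$, then embed the shift part into the right-translation semigroup and conjugate back. Your version is slightly more self-contained (you verify directly that $V$ and $S_1$ are pure shifts of the same multiplicity rather than citing \cite[Proposition~V.1.18]{TanEis}, and you correctly observe that $e^{i\theta}=1$ since $C_\psi\mathds 1=\mathds 1$, as well as the notational abuse in the statement arising from $C_{\tau_\alpha}$ not preserving $H_0^2$), but the architecture is identical.
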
	
\begin{proof}
First note that 
\begin{equation}\label{eq:sim}
	C_\varphi = C_{\tau_\alpha}C_\psi C_{\tau_\alpha}
\end{equation} where $\psi$ is an inner function such that $\psi(0)=0$ and $\psi$ is not an automorphism.  Therefore $C_\psi$ is isometric. 
Using Wold's decomposition and Lemma \ref{lem:inter}, we obtain that $H^2 = F \oplus^\perp G$ with $F := \bigcap_{n \ge 0} C_\psi^n H^2 = \C \mathds{1}$ and, by the properties of the orthogonal direct sum, $G := \bigoplus_{n \ge 0} C_\psi^n(H^2 \ominus C_\psi H^2) = H_0^2$. In that case  $(C_\psi)_{|F}$ is unitary on a vector space of dimension $1$. Therefore $(C_\psi)_{|F} = M_{e^{i\theta}}$ for some  $\theta\in\R$. On the other side, $(C_\psi)_{|G}$ is unitarily equivalent to the right shift $S$ on $\ell^2(\N, \mbox{Im}(C_\psi)^\perp)$. According to the embedding of $S$ by \cite[Proposition V.1.18]{TanEis}, we obtain the embedding of $C_\psi$ into the $C_0$-semigroup $\left (M_{e^{it\theta}} \oplus U^*S_tU \right)_{t \ge 0}$ where $U : H_0^2 \rightarrow L^2(\R_+, \mbox{Im}(C_\psi)^\perp)$ is unitary, $\theta \in \R$ and $(S_t)_{t \ge 0}$ is the right semigroup on $L^2(\R_+, \mbox{Im}(C_\psi)^\perp)$. We conclude the proof using \eqref{eq:sim}.
\end{proof}

\begin{rem}
    Another special case is when the symbol $\varphi$ is a \textit{linear fractional map} of the unit disc. Indeed, we have a complete characterization of the embedding of $\varphi$ according to its fixed points by \cite[Proposition 3.4]{LFMPlong}. Then, we have the natural embedding of $C_\varphi$ by Remark \ref{SGSF}. However, there are some examples where the embedding of $C_\varphi$ into a $C_0$-semigroup of composition operators on $H^2$ is not possible. To that aim, it suffices  to consider $\varphi$ the attractive elliptic function on $\D$ defined by $\varphi(z) = \frac{z}{z-2}$. Indeed, $\varphi$ does not satisfy the following required condition:
    \begin{equation}\label{condPlongEllAttrac}
            \abs{\overline{\alpha} - \frac{1}{\beta}}l \le \abs{\varphi'(\alpha)} \abs{1 - \frac{\alpha}{\beta}},
    \end{equation} where $\alpha \in \D$ is its Denjoy-Wolff point, $\beta \in (\C \cup \{\infty\}) \backslash \D$ its repulsive fixed point and $l = l(\varphi'(\alpha))$ the lenght of the canonical spiral associated with $\varphi'(\alpha) \in \D \backslash \{0\}$. Consequently, $\varphi$ is not embeddable into a semiflow of analytic self-maps of $\D$. We refer the reader to \cite{BraConDia} for more information about Denjoy-Wolff theory. 
\end{rem}

Let us now introduce weighted composition operators. Let $w \in H^2$ and $\varphi : \D \to \D$ be analytic. We define the \textit{weighted composition operator} $C_{w, \varphi}$ with symbol $\varphi$ and weight $w$ by $$C_{w, \varphi} : f \longmapsto w(f \circ \varphi), \quad f \in H^2.$$ In \cite{OpCompPondIso}, R. Kumar and J. Partington proved that $C_{w, \varphi}$ is an isometry on $H^2$ if and only if $\varphi$ is inner, $\norm{w}_2 = 1$ and $\scal{w}{w \varphi^n}_2 = 0$ for every $n \ge 1$.

In \cite{OpCompPondIso2},  I. Chalendar and J. Partington obtained the following result: if $\varphi$ is inner, then there exists a weight $w \in H^2$ such that $C_{w, \varphi}$ is an isometry on $H^2$.

The combination of these two results gives the following sufficient condition about the embedding of weighted composition operators. The main interest of this assertion is that, provided that we make an appropriate choice of the weight, it is not required that the symbol of the composition operator  has a fixed point in $\D$.  

\begin{theo}
    Let $\varphi$ be an inner function. Then there exists a weight $w \in H^2$ such that $C_{w, \varphi}$ is embeddable into a $C_0$-semigroup on $H^2$.
\end{theo}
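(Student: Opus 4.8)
The plan is to combine the two cited results of Kumar--Partington and Chalendar--Partington with Theorem~\ref{PlongIsoTanja}. First I would invoke \cite{OpCompPondIso2} to produce a weight $w \in H^2$ such that $C_{w,\varphi}$ is an isometry on $H^2$; by the Kumar--Partington characterization this means $\norm{w}_2 = 1$ and $\scal{w}{w\varphi^n}_2 = 0$ for all $n \ge 1$. By Theorem~\ref{PlongIsoTanja}, $C_{w,\varphi}$ is then embeddable into a $C_0$-semigroup on $H^2$ as soon as it is unitary or $\codim(C_{w,\varphi}H^2) = \infty$. So the whole question reduces to analysing the range of $C_{w,\varphi}$.

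The key step is to show that whenever $\varphi$ is a non-automorphic inner function, $C_{w,\varphi}$ cannot be unitary and in fact has infinite-codimensional range, while if $\varphi$ is an automorphism the operator turns out to be unitary (or one can just quote the automorphic embedding directly). For the non-automorphic case I would argue as in Lemma~\ref{exi2preim}: since $\varphi$ is non-automorphic inner, it fails to be injective (by the Frostman / Blaschke-product dichotomy already used in the proof of the composition-operator theorem above), and more is true --- one can find infinitely many distinct points $z_k \in \D$ each having two distinct preimages $w_k \neq w_k'$ under $\varphi$. Then for every $f \in H^2$,
\[
\scal{C_{w,\varphi}f}{\,\overline{w(w_k)}\,k_{w_k} - \overline{w(w_k')}\,k_{w_k'}}_2 = w(w_k)\overline{w(w_k)}\,f(z_k) - \cdots
\]
needs a little care because of the weight; the clean way is to note that $C_{w,\varphi}f$ vanishes on the set where $w$ vanishes only trivially, so instead one uses that for $f \in H^2$ the value of $C_{w,\varphi}f$ at $w_k$ and at $w_k'$ are $w(w_k)f(z_k)$ and $w(w_k')f(z_k)$, hence the vector $\overline{w(w_k')}\,k_{w_k} - \overline{w(w_k)}\,k_{w_k'}$ is orthogonal to $\mathrm{Im}(C_{w,\varphi})$. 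Discarding the (at most countably many) indices where $w(w_k) = w(w_k') = 0$ and checking linear independence of the surviving family gives $\codim(\mathrm{Im}(C_{w,\varphi})) = \infty$, and Theorem~\ref{PlongIsoTanja} finishes the proof. In the automorphic case, $\varphi$ inner and an automorphism forces $\varphi$ to be a disc automorphism, $C_\varphi$ is invertible, and one checks $C_{w,\varphi}$ is unitary (it is an isometry with dense range), so again Theorem~\ref{PlongIsoTanja} applies.

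The main obstacle I anticipate is the bookkeeping with the weight $w$: one must guarantee that, among the infinitely many pairs of distinct preimages $(w_k, w_k')$, infinitely many are not both zeros of $w$ (so that the orthogonal vectors are genuinely nonzero), and that the resulting family in $\mathrm{Im}(C_{w,\varphi})^\perp$ is linearly independent. Since $w \in H^2$ is not identically zero, its zero set is a Blaschke sequence, hence discrete in $\D$; choosing the base points $z_k$ to avoid $\varphi(\mathrm{Zero}(w))$ (a countable set, still leaving room by the non-injectivity of $\varphi$) sidesteps this entirely. Linear independence then follows from the standard fact that reproducing kernels at distinct points are linearly independent, together with a triangularity argument on the supports. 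An alternative, cleaner route that avoids all weight bookkeeping: observe that $C_{w,\varphi}H^2 \subseteq w H^2$ only superficially; better, use that $C_{w,\varphi} = M_w C_\varphi$ with $M_w$ injective (as $w \not\equiv 0$), so $\mathrm{Im}(C_{w,\varphi})$ and $\mathrm{Im}(C_\varphi)$ have the same codimension is \emph{false} in general, but one does get $\dim \mathrm{Im}(C_{w,\varphi})^\perp \ge \dim(\overline{w \cdot \mathrm{Im}(C_\varphi)})^\perp$; I would rather stick with the direct orthogonality computation above, which is elementary and self-contained.
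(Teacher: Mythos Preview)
Your approach is workable but takes a genuinely different (and more laborious) route than the paper. The paper does not fix the Chalendar--Partington weight and then fight with it; instead it \emph{exploits the freedom in choosing $w$}. Starting from any $m\in H^2$ with $\norm{m}_2=1$ and $\scal{m}{m\varphi^n}_2=0$ (which exists by \cite{OpCompPondIso2}), the paper sets $w=Bm$ where $B$ is an \emph{infinite} Blaschke product with zero sequence $(\lambda_n)_{n\ge1}$. Since $B$ is inner, $w$ still satisfies the Kumar--Partington conditions, so $C_{w,\varphi}$ is isometric; and now $\scal{C_{w,\varphi}f}{k_{\lambda_n}}_2=w(\lambda_n)f(\varphi(\lambda_n))=0$ for every $n$, so $(k_{\lambda_n})_{n\ge1}\subset\mathrm{Im}(C_{w,\varphi})^\perp$ directly. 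No case split on $\varphi$, no preimage bookkeeping, and the argument is uniform over all inner symbols.

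Your route, by contrast, keeps a generic $w$ and leans on the non-injectivity of $\varphi$ together with a case split. The non-automorphic case is fine once you (i) verify that the set of $z\in\D$ with at least two $\varphi$-preimages is uncountable (true via Lemma~\ref{lemzerodistinctEqB} for finite Blaschke products of degree $\ge2$, and via Frostman for the remaining inner functions, since then $\varphi^{-1}(\gamma)$ is infinite for a.e.\ $\gamma$), and (ii) discard the countable set $\varphi(\mathrm{Zero}(w))$. Your automorphic case, however, contains a false claim: it is \emph{not} true that an arbitrary isometric $C_{w,\varphi}$ with $\varphi$ an automorphism must be unitary (take $\varphi=\mathrm{id}$ and $w$ any non-constant inner function: then $C_{w,\varphi}=M_w$ is a non-surjective isometry). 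Your parenthetical escape ``quote the automorphic embedding directly'' does rescue the argument, but only by switching to a \emph{different} weight (e.g.\ the unitary weight $w=c(1-\overline{\alpha}z)^{-1}$ for $\varphi=\tau_\alpha$-type automorphisms), so you should say so explicitly rather than asserting unitarity of the original $w$. What your approach buys is a slightly stronger by-product in the non-automorphic case: \emph{every} isometric weighted composition operator $C_{w,\varphi}$ is embeddable, not just one special choice of $w$. The paper's approach buys a three-line proof with no cases.
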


\begin{proof}
    Since $\varphi$ is inner, there exists a weight $w \in H^2$ such that $C_{w, \varphi}$ is an isometry on $H^2$. Moverer, $w$ satisfied $\norm{w}_2 = 1$ and $\scal{w}{w \varphi^n}_2 = 0$ for every $n \ge 1$. It remains to show that $\codim (\mbox{Im}(C_{w, \varphi})) = \infty$. Note that, for every $\lambda \in \D$ and $f \in H^2$, we have $$\scal{C_{w, \varphi}f}{k_\lambda}_2 = w(\lambda)C_\varphi f(\lambda) = w(\lambda) (f \circ \varphi(\lambda)).$$ Then, we deduce that $\scal{C_{w, \varphi}f}{k_\lambda}_2 = 0$ if and only if $w(\lambda) = 0$ or $f \circ \varphi(\lambda) = 0$. Take $w = Bm$ where  $B$ is an infinite Blaschke product associated with a sequence $(\lambda_n)_{n \ge 1} \subset \D$ satisfying $\sum_{n \ge 1} (1 - \abs{\lambda_n}) < \infty$ and where $m \in H^2$ satisfies $\norm{m}_2 = 1$ and $\scal{m}{m \varphi^n}_2 = 0$ for every $n \ge 1$. Notice that since $B$ is inner, we get $\norm{w}_2 = 1$ and $\scal{w}{w \varphi^n}_2 = 0$ for every $n \ge 1$. Thus $C_{w, \varphi}$ is an isometry on $H^2$ such that $$\scal{C_{w, \varphi}f}{k_\lambda}_2 = B(\lambda) m(\lambda) C_\varphi f(\lambda) = 0, \quad \lambda \in \{\lambda_n :n \ge 1\} \subset \D.$$ In other words, we deduce that $\mbox{Span}_{H^2}(k_\lambda : \lambda \in \mbox{Zero}(B)) \subset \mbox{Im}(C_{w, \varphi})^\perp$ and \[\codim (\mbox{Im}(C_{w, \varphi})) = \dim (\mbox{Im}(C_{w, \varphi})^\perp) = \infty.\] Finally, for such a $w \in H^2$, $C_{w, \varphi}$ is embeddable into a $C_0$-semigroup on $H^2$ by Theorem \ref{PlongIsoTanja}. 
\end{proof}

\begin{rem}
    The form of the $C_0$-semigroup in which $C_{w, \varphi}$ is embeddable is less explicit than the one gived in  Corollary \ref{cor:form}.   Indeed, for $\varphi$ an inner function and $w \in H^2$ the weight such that $C_{w, \varphi}$ is an isometry on $H^2$, by the Wold's decomposition, $H^2 = F \oplus^\perp G$ where $(C_{w, \varphi})_{|F}$ is unitary and  $(C_{w, \varphi})_{|G}$ is unitarily equivalent to the right shift on $\ell^2(\N, \mbox{Im}(C_{w, \varphi})^\perp)$. Then, by \cite[Theorem V.1.14]{TanEis}, $C_{w, \varphi}$ is embeddable into the $C_0$-semigroup $$\left (Z^* \left (e^{t Log (m)} \right)Z \oplus U^*S_tU \right )_{t \ge 0}$$ where $\mu$ is a Borel measure, $m \in L^\infty(\sigma((C_{w, \varphi})_{|F}), \mu)$ is measurable and \[Z : F \rightarrow L^2(\sigma((C_{w, \varphi})_{|F}), \mu),\,\,\, U : G \rightarrow L^2(\R_+, \mbox{Im}(C_{w, \varphi})^\perp)\]
     are unitary operators. 
\end{rem}

\subsection{Analytic Toeplitz operators on \texorpdfstring{$H^2$}{H2}}

Let $\varphi \in L^\infty(\T)$. We define the \textit{Toeplitz operator} $T_\varphi$ with symbol $\varphi$ by $$T_\varphi : f \longmapsto P_+(\varphi f), \quad f \in H^2,$$ where $P_+$ denotes here the Riesz projection i.e. the orthogonal projection of $L^2(\T)$ onto $H^2$. It is a bounded operator on $H^2$ whose norm is equal to $\|\varphi\|_\infty$. See \cite[Section 4]{espMod} for the main properties about Toeplitz operators with symbols in $L^\infty(\T)$. From now on, assume  that $\varphi \in H^\infty$, and note   that $T_\varphi$ is then the multiplication operator by $\varphi$. We have $\ker(T_\varphi) = \{0\}$ and $\ker(T_\varphi^*) = \mathcal{K}_\theta$, thus $$\codim(\mbox{Im}(T_\varphi)) = \dim(\ker (T_\varphi^*)) = \dim (\mathcal{K}_\theta)$$ where $\mathcal{K}_\theta := (\theta H^2)^\perp$ is the \textit{model space} associated with $\theta$ the inner part of $\varphi$. We refer the reader to \cite{espMod} for a very nice introduction to  model space theory. We also recall  that $T_\varphi$ is an isometry on $H^2$ if and only if $\varphi$ is inner. 

\begin{theo}\label{plongToepInt}
    Let $\varphi$ be a non constant inner function. Then $T_\varphi$ is embeddable into a $C_0$-semigroup on $H^2$ if and only if $\varphi$ is not a finite Blaschke product. Moreover, the operators of the semigroup are analytic Toeplitz operators if and only if $\varphi$ does not have any zero in $\D$. 
\end{theo}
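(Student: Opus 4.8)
The plan is to read off both assertions from Theorem~\ref{PlongIsoTanja} applied to the isometry $T_\varphi$. Since $\varphi$ is non constant, $T_\varphi$ is not unitary, so Theorem~\ref{PlongIsoTanja} reduces embeddability to the condition $\codim(T_\varphi H^2)=\dim\Komp_\varphi=\infty$, where $\Komp_\varphi=H^2\ominus\varphi H^2$. Hence the first statement becomes the purely function-theoretic fact that, for a non constant inner $\varphi$, the model space $\Komp_\varphi$ is infinite dimensional if and only if $\varphi$ is not a finite Blaschke product. I would prove this as follows: if $\varphi$ is a finite Blaschke product of degree $N$, then $\dim\Komp_\varphi=N<\infty$ (standard, see \cite{espMod,BlaFin}); if $\varphi$ is not a finite Blaschke product, look at its canonical factorization $\varphi=B\,S$ into Blaschke and singular parts. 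Either $B$ has infinitely many (hence infinitely many distinct) zeros $(\alpha_n)$, and then the reproducing kernels $k_{\alpha_n}$ belong to $\Komp_\varphi$ — because $\scal{\varphi f}{k_{\alpha_n}}_2=\varphi(\alpha_n)f(\alpha_n)=0$ for all $f\in H^2$ — and are linearly independent; or $B$ is finite, forcing $S$ non constant, and then, writing the singular measure of $S$ as $\mu=\sum_{k\ge1}2^{-k}\mu$, one splits $S=\prod_{k\ge1}S_k$ into infinitely many non constant singular inner functions and combines $\Komp_{\theta\eta}=\Komp_\theta\oplus\theta\Komp_\eta$ with $\varphi H^2\subset BS_1\cdots S_n H^2$ to get $\dim\Komp_\varphi\ge n$ for all $n$. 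Either way $\dim\Komp_\varphi=\infty$.

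For the ``moreover'' part, sufficiency: if $\varphi$ has no zero in $\D$, then $\varphi(z)=\exp\!\big(i\gamma-\int_\T\frac{\zeta+z}{\zeta-z}\,d\mu(\zeta)\big)$ for some $\gamma\in\R$ and some positive measure $\mu$ on $\T$ singular with respect to $m$ (with $\mu\ne0$ since $\varphi$ is non constant), and I would simply take ``the $t$-th power of the symbol'',
\[
\varphi_t(z)=\exp\!\Big(it\gamma-t\!\int_\T\tfrac{\zeta+z}{\zeta-z}\,d\mu(\zeta)\Big),\qquad t\ge0,
\]
so that each $\varphi_t$ is inner, $\varphi_0=\mathds 1$, $\varphi_1=\varphi$, $\varphi_{t+s}=\varphi_t\varphi_s$ and $t\mapsto\varphi_t(z)$ is continuous for every $z\in\D$. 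Then $(T_{\varphi_t})_{t\ge0}$ is a family of analytic Toeplitz operators with $T_{\varphi_0}=Id$ and $T_{\varphi_{t+s}}=T_{\varphi_t}T_{\varphi_s}$, and strong continuity follows from the usual Hilbert-space argument: for fixed $f\in H^2$ one has $\norm{\varphi_t f}_2=\norm f_2$ (each $T_{\varphi_t}$ is isometric) and $\scal{\varphi_t f}{k_\lambda}_2\to\scal{f}{k_\lambda}_2$ for every $\lambda\in\D$, hence $\varphi_t f\rightharpoonup f$ (the $k_\lambda$ have dense span and the net is norm bounded), and weak convergence together with convergence of the norms gives $\norm{\varphi_t f-f}_2\to0$. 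This yields $T_\varphi\hookrightarrow(T_{\varphi_t})_{t\ge0}$ inside a $C_0$-semigroup of analytic Toeplitz operators.

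For the necessity in the ``moreover'' part: assume $\varphi(\alpha)=0$ for some $\alpha\in\D$ and suppose, for a contradiction, that $T_\varphi\hookrightarrow(T_t)_{t\ge0}$ with every $T_t=T_{\psi_t}$ an analytic Toeplitz operator, $\psi_t\in H^\infty$. Applying the operators to $\mathds 1$ and using $T_\eta\mathds 1=\eta$ together with $T_{\psi_t}T_{\psi_s}=T_{\psi_t\psi_s}$, the relations $T_0=Id$, $T_1=T_\varphi$, $T_{t+s}=T_tT_s$ become $\psi_0=\mathds 1$, $\psi_1=\varphi$, $\psi_{t+s}=\psi_t\psi_s$; in particular $\psi_1=\psi_{1/n}^{n}$, so $\psi_{1/n}(\alpha)^n=\varphi(\alpha)=0$ and hence $\psi_{1/n}(\alpha)=0$ for every $n$, whereas strong continuity forces $\psi_{1/n}(\alpha)=\scal{\psi_{1/n}}{k_\alpha}_2\to\scal{\mathds 1}{k_\alpha}_2=1$ — a contradiction. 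I expect this last step, the necessity of the ``no zero'' condition, to be the conceptually sharpest point of the argument, even though it is very short once one evaluates $\psi_t$ at $\alpha$ near $t=0$; the most labour-intensive ingredient is rather the model-space identity $\dim\Komp_\varphi=\infty\Leftrightarrow\varphi$ is not a finite Blaschke product, with the strong-continuity check in the sufficiency part being routine but worth spelling out.
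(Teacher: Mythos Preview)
Your argument is correct. For the first assertion and for the sufficiency in the ``moreover'' part you follow essentially the paper's route, only spelling out details the paper leaves to references: the paper cites \cite[Proposition~5.5.19]{espMod} for $\dim\Komp_\varphi=\infty\Leftrightarrow\varphi$ is not a finite Blaschke product, whereas you supply a direct proof via the canonical factorization, and the paper asserts the strong continuity of $(T_{\varphi_t})_{t\ge0}$ without detail, whereas you verify it via the standard ``weak convergence plus convergence of norms'' argument. The genuine difference is in the necessity of the ``moreover'' part. The paper appeals to Seubert's characterization \cite{SGToep}: any $C_0$-semigroup of analytic Toeplitz operators has the form $(T_{e^{tC}})_{t\ge0}$ for some holomorphic $C$ with $\sup_{z\in\D}\mbox{Re}\,C(z)<\infty$, so $\varphi=e^{C}$ is zero-free. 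Your argument is more elementary and self-contained: from $\psi_{1/n}^{\,n}=\varphi$ you get $\psi_{1/n}(\alpha)=0$ at any zero $\alpha$ of $\varphi$, while strong continuity forces $\psi_{1/n}(\alpha)\to1$. This avoids the external reference at the cost of not recovering the structural description of the semigroup; conversely, the paper's approach gives more information (the exact exponential form of any Toeplitz semigroup) but relies on \cite{SGToep}. A minor point in your favor: you keep track of the unimodular constant $e^{i\gamma}$ in the singular-inner representation, which the paper's formula suppresses.
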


\begin{proof}
    Let $\varphi$ be a non constant inner function. Then $T_\varphi$ is an isometry on $H^2$. By Theorem \ref{PlongIsoTanja}, $T_\varphi$ is embeddable if and only if $\codim(\mbox{Im} (T_\varphi)) = \dim(\mathcal{K}_\varphi) = \infty$. We deduce then easily that $T_\varphi$ is embeddable into a $C_0$-semigroup on $H^2$ if and only if $\varphi$ is not a finite Blaschke product. See \cite[Proposition 5.5.19]{espMod}. Denote by $(R_t)_{t \ge 0}$ this semigroup. Let us recall that the commutant $\{S\}'$ of $S$ on $H^2$ is gived by $$\{S\}' = \left \{ T_\psi : \psi \in H^\infty \right \}.$$ In that case, $(R_t)_{t \ge 0}$ is a semigroup of analytic Toeplitz operators if and only there exists $C \in \mbox{Hol}(\D)$ satisfying $\sup \{ \mbox{Re} (C(z)) : z \in \D \} < \infty$ and such that $R_t = T_{e^{tC}}$ for every $t \ge 0$ \cite{SGToep}. In particular, we get $R_1 = T_\varphi = T_{e^C}$ and $\varphi = e^C$, which does not vanish on $\D$. Reciprocally, if $\varphi$ does not vanish on $\D$, then $\varphi$ is an inner singular function of the form $$\varphi(z) = \exp \left \{- \int_{\T} \frac{\zeta + z}{\zeta - z} ~d\mu(\zeta) \right \}, \quad z \in \D$$
    for $\mu$ a finite positive measure of Borel on $\T$ such that $\mu$ is singular with respect to the Lebesgue measure $m$. It is easy to see, by considering, for every $t \ge 0$, the bounded analytic functions $$\varphi_t(z) = \exp \left \{- t\int_{\T} \frac{\zeta + z}{\zeta - z} ~d\mu(\zeta) \right \}, \quad z \in \D,$$ that $T_\varphi$ is embeddable into the $C_0$-semigroup $(M_{\varphi_t} = T_{\varphi_t})_{t \ge 0}$ of analytic Toeplitz operators on $H^2$.
\end{proof}
The case of the isometric Toeplitz operators is now complete. The aim of the rest of this section is to investigate other analytic  Toeplitz operators.   
 \begin{lem}\label{plongToepExt}
    Let $\varphi$ be an outer function. Then $T_\varphi$ is embeddable into a $C_0$-semigroup of analytic Toeplitz operators on $H^2$.
\end{lem}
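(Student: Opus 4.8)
The plan is to write the outer function $\varphi$ as $\varphi = e^{C}$ for a suitable $C \in \mbox{Hol}(\D)$, and to embed $T_\varphi$ into the semigroup $(T_{e^{tC}})_{t\ge 0}$ of analytic Toeplitz operators, in the same spirit as the singular inner case treated inside the proof of Theorem \ref{plongToepInt}. Working under the standing assumption $\varphi \in H^\infty$, an outer function has no zero in the simply connected domain $\D$, so it admits a holomorphic logarithm $C \in \mbox{Hol}(\D)$ with $e^{C} = \varphi$; concretely one may take
\[
C(z) = \log|c| + i\arg c + \int_\T \frac{\zeta+z}{\zeta-z}\,\log|\varphi(\zeta)|\,dm(\zeta),
\]
where $c$ is the unimodular constant in the outer representation of $\varphi$. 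The crucial observation is that $\mbox{Re}(C(z)) = \log|\varphi(z)| \le \max(0,\log\|\varphi\|_\infty)$, hence $\sup\{\mbox{Re}(C(z)):z\in\D\} < \infty$, which is exactly the condition appearing in the description of $C_0$-semigroups of analytic Toeplitz operators used in the proof of Theorem \ref{plongToepInt}.

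Next I set $\varphi_t := e^{tC} \in \mbox{Hol}(\D)$ for $t \ge 0$. Since $|\varphi_t(z)| = |\varphi(z)|^t \le \max(1,\|\varphi\|_\infty^t)$, each $\varphi_t$ lies in $H^\infty$, so $T_{\varphi_t} = M_{\varphi_t} \in \Lcont(H^2)$ is an analytic Toeplitz operator with $\|T_{\varphi_t}\| = \|\varphi_t\|_\infty$. The semigroup axioms are checked directly: $\varphi_0 = \mathds{1}$ gives $T_{\varphi_0} = Id$; the identity $\varphi_{t+s} = \varphi_t\varphi_s$ together with $T_\psi T_\eta = T_{\psi\eta}$ for $\psi,\eta \in H^\infty$ gives $T_{\varphi_{t+s}} = T_{\varphi_t}T_{\varphi_s}$; and $\varphi_1 = e^{C} = \varphi$ gives $T_{\varphi_1} = T_\varphi$.

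For strong continuity it suffices, by the semigroup property, to treat $t \to 0^+$. For $f \in H^2$ one has
\[
\|T_{\varphi_t}f - f\|_2^2 = \int_\T |\varphi_t(\zeta)-1|^2\,|f(\zeta)|^2\,dm(\zeta),
\]
where $\varphi_t$ has nontangential boundary values $e^{tC(\zeta)}$ with $C(\zeta)$ finite $m$-a.e. (its real part is $\log|\varphi| \in L^1(\T)$ and its imaginary part is the conjugate function, finite a.e.). Thus the integrand tends to $0$ a.e. as $t \to 0^+$ and, for $t \in [0,1]$, is dominated by $(1+\max(1,\|\varphi\|_\infty))^2|f(\zeta)|^2 \in L^1(\T)$, so dominated convergence yields $\|T_{\varphi_t}f-f\|_2 \to 0$. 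Alternatively one can simply invoke the characterization of $C_0$-semigroups of analytic Toeplitz operators from \cite{SGToep} already quoted in the proof of Theorem \ref{plongToepInt}, applied to the function $C$ above. In either case $T_\varphi$ embeds into $(T_{\varphi_t})_{t\ge 0}$.

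I do not expect a genuine obstacle: the only mildly delicate point is the strong continuity, handled by the uniform bound $\|T_{\varphi_t}\| \le \max(1,\|\varphi\|_\infty)$ on $t \in [0,1]$ combined with the a.e. boundary convergence $e^{tC(\zeta)} \to 1$ and dominated convergence; the holomorphic logarithm, the boundedness of each $\varphi_t$, and multiplicativity are routine. The construction parallels the singular inner case in Theorem \ref{plongToepInt}, with the positive singular measure there replaced by the absolutely continuous density $-\log|\varphi(\zeta)|\,dm(\zeta)$ (now of arbitrary sign), the boundedness of $\mbox{Re}(C)$ above being what replaces positivity of the measure.
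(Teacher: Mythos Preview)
Your proof is correct and is precisely what the paper's one-line proof (``an immediate consequence of the canonical representation of an outer function'') is pointing to: write $\varphi=e^{C}$ via the outer representation and embed $T_\varphi$ into $(T_{e^{tC}})_{t\ge0}$. You have simply spelled out the details the paper leaves implicit, including the verification of strong continuity.
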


\begin{proof}
    It is an immediate consequence of the canonical representation of an outer function. 
\end{proof}

\begin{prop}\label{plongToepSemiGeneral}
    Let $\varphi = (BS_\mu) \varphi_e \in H^\infty$ where $B$ is a Blaschke product, $S_\mu$ is an inner singular function and $\varphi_e$ is an outer function. Assume that $\varphi$ is not an outer nor an inner function. Then the following assertions hold:
        \begin{itemize}
            \item[(i)] if $B \equiv 1$, $T_\varphi$ is embeddable into a $C_0$-semigroup of analytic Toeplitz operators on $H^2$.
            \item[(ii)] if $S_\mu \equiv 1$ and if $B$ is a non constant finite Blaschke product, $T_\varphi$ is not embeddable into a $C_0$-semigroup on $H^2$.
        \end{itemize}
\end{prop}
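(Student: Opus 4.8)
The plan is to treat the two cases by unrelated mechanisms: in (i) I would write $\varphi$ as a complex exponential and take powers, exactly in the spirit of (the zero‑free part of) Theorem \ref{plongToepInt} and of Lemma \ref{plongToepExt}; in (ii) I would invoke T. Eisner's codimension obstruction recalled in the introduction (see \cite{TanEis}). For (i), since $B\equiv 1$ we have $\varphi=S_\mu\varphi_e$, and the first thing to record is that $\varphi$ \emph{has no zero in $\D$}: a singular inner function $S_\mu(z)=\exp\!\bigl(-\int_\T\frac{\zeta+z}{\zeta-z}\,d\mu(\zeta)\bigr)$ and an outer function $\varphi_e(z)=e^{i\gamma}\exp\!\bigl(\int_\T\frac{\zeta+z}{\zeta-z}\log\abs{\varphi_e(\zeta)}\,dm(\zeta)\bigr)$ are exponentials of functions holomorphic on $\D$ (the integral defining $\varphi_e$ is legitimate because $\varphi_e\in H^\infty$ is not identically $0$, so $\log\abs{\varphi_e}\in L^1(\T)$), hence both are zero‑free on $\D$. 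Thus $\varphi=e^{c}$ with
\[
c(z)=i\gamma-\int_\T\frac{\zeta+z}{\zeta-z}\,d\mu(\zeta)+\int_\T\frac{\zeta+z}{\zeta-z}\log\abs{\varphi_e(\zeta)}\,dm(\zeta),
\]
and since $\mbox{Re}\,\frac{\zeta+z}{\zeta-z}=\frac{1-\abs{z}^2}{\abs{\zeta-z}^2}$ is the nonnegative Poisson kernel, $\mu\ge0$, and $\log\abs{\varphi_e(\zeta)}=\log\abs{\varphi(\zeta)}\le\log\norm{\varphi}_\infty$ on $\T$, one gets $\mbox{Re}\,c(z)\le\log\norm{\varphi}_\infty$, i.e. $\sup_{z\in\D}\mbox{Re}\,c(z)<\infty$. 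I would then set $\varphi_t:=e^{tc}\in H^\infty$ for $t\ge0$, observe $\varphi_0\equiv1$, $\varphi_t\varphi_s=\varphi_{t+s}$, $\varphi_1=\varphi$, and conclude that $(M_{\varphi_t}=T_{\varphi_t})_{t\ge0}$ is a $C_0$-semigroup of analytic Toeplitz operators with $T_{\varphi_1}=T_\varphi$, by the characterization of \cite{SGToep} already used in the proof of Theorem \ref{plongToepInt}. (Alternatively one checks strong continuity directly, via dominated convergence on $\T$ applied to $\varphi_t k_\lambda\to k_\lambda$ together with $\sup_{0\le t\le1}\norm{M_{\varphi_t}}<\infty$ and the density of $\mbox{Span}(k_\lambda:\lambda\in\D)$ in $H^2$.) This settles (i).

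For (ii), write $\varphi=B\varphi_e$ with $B$ a non‑constant finite Blaschke product and $\varphi_e$ outer. Since $\varphi\not\equiv0$, $\ker(T_\varphi)=\{0\}$, so the kernel causes no obstruction and only the range matters. Because $\abs{B}=1$ on $\T$, $M_B$ is an isometry of $H^2$, so it maps the closure of a subspace onto the closure of its image; as $\varphi_e$ is outer we have $\overline{\varphi_e H^2}=H^2$, hence
\[
\overline{\mbox{Im}(T_\varphi)}=\overline{B\,\varphi_e H^2}=M_B\bigl(\overline{\varphi_e H^2}\bigr)=BH^2 .
\]
Therefore $\codim\bigl(\overline{\mbox{Im}(T_\varphi)}\bigr)=\dim(H^2\ominus BH^2)=\dim\mathcal{K}_B$, which is finite because $B$ is a finite Blaschke product (cf. \cite[Proposition 5.5.19]{espMod}) and nonzero because $B$ is non‑constant (e.g. $k_a\in\mathcal{K}_B$ for any zero $a$ of $B$). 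Thus $\codim(\overline{\mbox{Im}}(T_\varphi))\in\{1,2,\dots\}$, which contradicts the necessary condition of T. Eisner (an embeddable $T$ must have $\codim(\overline{\mbox{Im}}(T))\in\{0,\infty\}$); hence $T_\varphi$ is not embeddable into any $C_0$-semigroup on $H^2$.

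I do not foresee a real difficulty. The only point that calls for a little care is the identity $\overline{B\varphi_e H^2}=BH^2$, which is precisely where the isometry property $\norm{Bf}_2=\norm{f}_2$ of $M_B$ is used; beyond that, a one‑line check is needed that the standing assumption ``$\varphi$ is neither inner nor outer'' does not make either case degenerate. For instance, in (ii) the outer factor $\varphi_e$ could be a non‑unimodular constant, but then $\overline{\varphi_e H^2}=H^2$ still holds and nothing changes; likewise in (i) a constant outer factor affects neither the zero‑freeness of $\varphi$ nor the bound on $\mbox{Re}\,c$.
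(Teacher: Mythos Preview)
Your proof is correct and follows essentially the same approach as the paper. For (i) the paper factors $T_\varphi=M_{S_\mu}M_{\varphi_e}$ and takes the product of the two commuting semigroups supplied by Theorem~\ref{plongToepInt} and Lemma~\ref{plongToepExt}, while you write down a single holomorphic logarithm $c$ of $\varphi$ directly---the same zero-free mechanism, just packaged in one step; for (ii) both you and the paper compute $\codim(\overline{\mbox{Im}}(T_\varphi))=\dim\mathcal K_B\in\{1,2,\dots\}$ and invoke Eisner's codimension obstruction.
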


\begin{proof}
    We have  
        \begin{itemize}
            \item[(i)] if $B \equiv 1$, then $\varphi = S_\mu \varphi_e$ which does not vanish on $\D$. Let us remark that $T_\varphi = T_{S_\mu \varphi_e} = M_{S_\mu}M_{\varphi_e}$. Each term is embeddable into a $C_0$-semigroup on $H^2$ which commutes, then $T_\varphi$ is embeddable into the product of these two semigroups.
            \item[(ii)] if $S_\mu \equiv 1$ and $B$ is a non constant finite Blaschke product, then we get $$\codim(\mbox{Im}(T_\varphi)) = \dim (\mathcal{K}_B) \notin \{0, \infty \}.$$ We conclude with \cite[Theorem V.1.7]{TanEis}. \qedhere
        \end{itemize}
\end{proof}


	
    Let us  remark that according to Proposition \ref{plongToepSemiGeneral}, the remaining open  question is the following.  
    \begin{quest}
    Do we have the embedding of $T_\varphi$ when $\varphi = B\phi$, with $B$ a non constant   Blaschke product and $\phi$ a non vanishing  analytic function on $\D$?
    \end{quest}
     For that purpose, let us just note that, on one side  $T_B$ is  embeddable if and only if $B$ is an infinite Blaschke product by  Theorem \ref{plongToepInt}.  On the other side $T_{\phi}$ is embeddable from Proposition \ref{plongToepSemiGeneral} $(i)$.
   The following examples show the difficulty and the interest of this open question.
    \begin{itemize}
        \item Let $B_1$ be a finite Blaschke product and $B_2$ be an infinite Blaschke product. Then $T_{B_1B_2}$ is embeddable by Theorem \ref{plongToepInt}, whereas $T_{B_1}$ is not embeddable.  
        \item Let $B$ be an infinite Blaschke product and $S_\mu$ be a singular inner function. Then $T_{B}$ and $T_{S_\mu}$ are embeddable into a $C_0$-semigroup denoted respectively by $(A_t)_{t \ge 0}$ and $(B_t)_{t \ge 0}$ by Theorem \ref{plongToepInt}. Moreover $T_{BS_\mu}$ is also embeddable into a $C_0$-semigroup, which is not the product of $(A_t)_{t \ge 0}$ and $(B_t)_{t \ge 0}$, even though $T_B$ and $T_{S_\mu}$ commute.  
    \end{itemize}

We end  this section with  a result on the embedding of  Toeplitz operators whose symbol are   polynomials. 
\begin{coro}\label{plongPolyToep}
    Let $n \ge 1$ and $P \in \mathcal{P}_n$, where $\mathcal{P}_n$ is the space of polynomials of degree at most $n$. Then $T_P$ is embeddable into a $C_0$-semigroup on $H^2$ if and only if $P$ does not have any zero in $\D$. 
\end{coro}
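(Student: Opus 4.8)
The plan is to factor the polynomial $P$ over $\C$ and reduce the statement to the results already established for Toeplitz operators with inner and outer symbols. First I would dispose of the trivial cases: if $P$ is constant, then $T_P$ is a scalar multiple of the identity and is trivially embeddable (if $P\equiv 0$ then $T_P=0$ is embeddable into the zero semigroup, and $P$ has no zero in $\D$), so assume $\deg P=d\geq 1$. Write $P(z)=c\prod_{j}(z-a_j)$ with $c\neq 0$ and the $a_j$ the roots counted with multiplicity. Group the roots: let $a_1,\dots,a_r$ be those lying in $\D$ and $a_{r+1},\dots,a_d$ those lying in $\overline\D^c\cup\T$. Then $P=c\cdot Q_{\mathrm{in}}\cdot Q_{\mathrm{out}}$ where $Q_{\mathrm{out}}(z)=\prod_{j>r}(z-a_j)$ has no zeros in $\D$, hence is an outer function (a polynomial with no zeros in $\D$ is outer), and $Q_{\mathrm{in}}(z)=\prod_{j\leq r}(z-a_j)$ vanishes exactly at the points of $\D$ that $P$ does.

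The key observation is that $T_P=c\,T_{Q_{\mathrm{in}}}T_{Q_{\mathrm{out}}}=c\,M_{Q_{\mathrm{in}}}M_{Q_{\mathrm{out}}}$ since all factors are in $H^\infty$, and moreover $Q_{\mathrm{in}}$ can be written as $B\,u$ where $B$ is the \emph{finite} Blaschke product with zeros $a_1,\dots,a_r$ and $u$ is outer and non-vanishing on $\D$ (indeed $\frac{a_j-z}{1-\overline{a_j}z}=\frac{-1}{1-\overline{a_j}z}(z-a_j)$, so $Q_{\mathrm{in}}=B\cdot\prod_{j\leq r}(-(1-\overline{a_j}z))$, and the latter product has no zeros in $\D$). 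Therefore $P=c'\,B\,\varphi_e$ with $\varphi_e$ outer and $B$ a finite Blaschke product whose degree $r$ equals the number of zeros of $P$ in $\D$. Now the two directions follow from the structure results. If $P$ has no zero in $\D$, then $r=0$, so $B\equiv 1$ and $P=c'\varphi_e$ is (a scalar times) an outer function; by Lemma \ref{plongToepExt}, $T_P$ is embeddable (into a $C_0$-semigroup of analytic Toeplitz operators, using the canonical representation $\varphi_e=e^{C}$ of an outer function and absorbing the constant $c'=e^{\log c'}$ into the exponent).

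Conversely, if $P$ has at least one zero in $\D$, then $r\geq 1$, so $B$ is a non-constant finite Blaschke product and $P=c'B\varphi_e$ with $\varphi_e$ outer; this is exactly the situation of Proposition \ref{plongToepSemiGeneral}(ii) with $S_\mu\equiv 1$ (note $P$ is not inner, since it is a polynomial of degree $d$ which on $\T$ does not have modulus $1$ unless $P$ is a monomial times a unimodular constant times a finite Blaschke product — and a monomial times a finite Blaschke product that is a polynomial forces $\varphi_e$ to be a unimodular constant, contradicting $r\geq 1$ together with $\deg Q_{\mathrm{out}}\geq 0$; more simply, if $P$ were inner it would be a finite Blaschke product, hence of the form (unimodular constant)$\cdot z^k\cdot$(finite Blaschke product with all zeros in $\D$), which as a polynomial means $Q_{\mathrm{out}}$ is constant, and then $P$ inner forces $|c'|$ appropriately but $T_P$ embeddable would need $\dim\mathcal K_B=\infty$, impossible for finite $B$). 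Hence $\codim(\operatorname{Im}(T_P))=\dim(\mathcal K_B)=r\notin\{0,\infty\}$, and Theorem \ref{PlongIsoTanja} (or \cite[Theorem V.1.7]{TanEis}, as in the proof of Proposition \ref{plongToepSemiGeneral}) shows $T_P$ is not embeddable.

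The only delicate point is bookkeeping around the inner/outer factorization — making sure the scalar constant does not spoil either the ``outer'' conclusion in the first direction or the Wold/codimension count in the second — and checking that a polynomial of degree $\geq 1$ with a zero in $\D$ genuinely falls under case (ii) of Proposition \ref{plongToepSemiGeneral} rather than being inner; both are routine once the factorization $P=c'B\varphi_e$ is in place, so I expect no real obstacle beyond this careful casework.
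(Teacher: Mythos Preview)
Your proposal is correct and follows essentially the same route as the paper: factor $P$ as a finite Blaschke product $B$ (built from the zeros of $P$ in $\D$) times an outer function, then invoke Lemma~\ref{plongToepExt} when $B\equiv 1$ and Proposition~\ref{plongToepSemiGeneral}(ii) when $B$ is non-constant. The only difference is cosmetic: you spend a fair amount of effort worrying whether $P$ might be inner (so that Proposition~\ref{plongToepSemiGeneral}, which assumes $\varphi$ is neither inner nor outer, would not literally apply), but this detour is unnecessary and your discussion of it is somewhat tangled---the codimension computation $\codim(\mathrm{Im}(T_P))=\dim\mathcal K_B=r\notin\{0,\infty\}$ and the appeal to \cite[Theorem~V.1.7]{TanEis} go through verbatim whether or not the outer part $\varphi_e$ is a unimodular constant, so you can simply bypass Proposition~\ref{plongToepSemiGeneral} and quote Eisner's necessary condition directly, as you in fact do at the end.
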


\begin{proof}
    Let $n \ge 1$ and $P \in \mathcal{P}_n$ of the form $$P(z) = a\prod_{k = 1}^n (z - \alpha_k)\prod_{j = 1}^m (z - \beta_j)$$ where $a \in \C \backslash \{0\}$, $\abs{\alpha_k} < 1$ for every $1 \le k \le n$ and $\abs{\beta_j} \ge 1$ for every $1 \le j \le m$. It follows that $P(z) = B(z)F(z)$ where $B$ is the finite Blaschke product associated with the sequence $(\alpha_k)_{1 \le k \le n}$ and $F$ is the outer function defined by $F(z) = a \prod_{k = 1}^n (1 - \overline{\alpha_k}z) \prod_{j = 1}^m (z - \beta_j)$. Then 
\begin{itemize}[label = \textbullet]
    \item if $P$ does not have any zero in $\D$ i.e. $\alpha_k \notin \D$ for every $1 \le k \le n$, then $B \equiv 1$ and $P$ is outer. Therefore, $T_P$ is embeddable into a $C_0$-semigroup on $H^2$ according to Lemma \ref{plongToepExt}.
    \item if $P$ has at least one zero in $\D$, then $B \not\equiv 1$. By Proposition \ref{plongToepSemiGeneral} $(ii)$, $T_P$ is not embeddable into a $C_0$-semigroup on $H^2$. \qedhere
\end{itemize}
\end{proof}

\section{Isometric operators and properties of semigroups}\label{PropOpIso}

In this last section we state two quite obvious results concerning  the properties of the semigroup in terms of isometry or compactness where the operator embedded is isometric. 

\begin{prop}
    Let $V \in \Lcont(H)$ be isometric. If $V$ is embeddable into a $C_0$-semigroup of contractions $(V_t)_{t \ge 0}$ on $H$, then $V_t$ is isometric for every $t \ge 0$.  
\end{prop}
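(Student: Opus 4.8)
The plan is to use the semigroup structure to show that if any $V_t$ fails to be isometric, then $V=V_1$ cannot be isometric either. Since $V_t$ is a contraction for every $t\ge 0$, I would introduce, for a fixed $x\in H$, the function $\phi_x(t):=\norm{V_t x}^2$, which is nonincreasing in $t$: indeed, for $s\le t$ we have $V_t x=V_{t-s}(V_s x)$ and $\norm{V_{t-s}}\le 1$, so $\norm{V_t x}\le\norm{V_s x}$. Now suppose, for contradiction, that some $V_{t_0}$ is not isometric, i.e.\ there is $x\in H$ with $\norm{V_{t_0}x}<\norm{x}$. Normalizing, assume $\norm x=1$ so $\phi_x(0)=1$ and $\phi_x(t_0)<1$.

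The key step is to propagate this strict decrease to an integer time. Because $\phi_x$ is nonincreasing, $\phi_x(t)\le\phi_x(t_0)<1$ for every $t\ge t_0$; in particular, choosing any integer $n\ge t_0$ (e.g.\ $n=\lceil t_0\rceil$), we get $\norm{V_n x}^2=\phi_x(n)<1=\norm x^2$. But $V_n=V_1^n=V^n$, and $V$ is an isometry, hence so is $V^n$; therefore $\norm{V^n x}=\norm x$, a contradiction. Thus no such $t_0$ exists, and $V_t$ is isometric for every $t\ge 0$. (The case $t=0$ is immediate since $V_0=\mathrm{Id}$.)

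I do not expect any serious obstacle here: the only facts used are the semigroup law $V_{t+s}=V_tV_s$, the contractivity hypothesis $\norm{V_t}\le 1$ (to get monotonicity of $\phi_x$), and the fact that integer powers of an isometry are isometries. The mild point to be careful about is that one cannot directly conclude $V_t$ is isometric from $V_1$ being isometric without contractivity — monotonicity of $t\mapsto\norm{V_t x}$ is exactly what the contraction assumption buys, and it is what lets a strict loss of norm at a non-integer time $t_0$ be ``seen'' at the integer time $\lceil t_0\rceil$. Strong continuity is not even needed for this argument, though it is of course part of the standing hypotheses.
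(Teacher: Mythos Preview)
Your proof is correct and follows essentially the same approach as the paper's: assume some $V_{t_0}$ is not isometric, pick $x$ with $\norm{x}=1$ and $\norm{V_{t_0}x}<1$, then use contractivity and the semigroup law to conclude $\norm{V_n x}\le\norm{V_{t_0}x}<1$ for an integer $n\ge t_0$, contradicting that $V_n=V^n$ is an isometry. The only cosmetic difference is that you package the argument via the monotone function $\phi_x(t)=\norm{V_t x}^2$, whereas the paper writes the single inequality $\norm{V_{N-t_0}V_{t_0}x_0}\le\norm{V_{t_0}x_0}$ directly.
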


\begin{proof}
    Let us remark that since $V$ is an isometry, $V^n = V_n$ is also an isometry for every $n \in \N$. Assume that there exists $t_0 > 0$ such that $V_{t_0}$ is not isometric. In that case, since $V_{t_0}$ is a contraction, there exists $x_0 \in H$, $\norm{x_0} = 1$ such that $\norm{V_{t_0}x_0} < 1$. But, for every $N > t_0$, we have on one hand $$\norm{V_{N - t_0}V_{t_0}x_0} = \norm{V_N x_0} = 1$$ and on the other hand $$\norm{V_{N - t_0}V_{t_0}x_0} \le \norm{V_{t_0}x_0} < 1.$$ We obtain a contradiction, and so $V_t$ is isometric for every $t > 0$.
\end{proof}

\begin{prop}
    Let $V \in \Lcont(H)$ be isometric. If $V$ is embeddable into a $C_0$-semigroup $(V_t)_{t \ge 0}$ on $H$, then $V_t$ is not compact for every $t \ge 0$. 
\end{prop}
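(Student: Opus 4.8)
The plan is to argue by contradiction: using that the compact operators form a two-sided ideal in $\Lcont(H)$ together with the semigroup law, a compact $V_{t_0}$ would force some integer power $V^n$ to be compact; but $V^n$ is an isometry, and no isometry on an infinite-dimensional Hilbert space is compact. Throughout I assume $\dim H = \infty$ (the relevant case here, e.g. $H = H^2$; in finite dimension an isometry is unitary and the statement is not the point). For $t = 0$ we have $V_0 = Id$, which is not compact precisely because $\dim H = \infty$, so it suffices to treat $t > 0$.

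So fix $t_0 > 0$ and suppose, for contradiction, that $V_{t_0}$ is compact. Pick an integer $n > t_0$. By the semigroup property $V_n = V_{n - t_0}\circ V_{t_0}$, hence $V_n$ is compact, being the composition of a bounded operator with a compact one. On the other hand, again by the semigroup property, $V_n = V_1 \circ \cdots \circ V_1 = V^n$ ($n$ factors), and since $V = V_1$ is an isometry so is $V^n$, i.e. $\norm{V^n x} = \norm{x}$ for every $x \in H$.

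It then remains to observe that $V^n$ cannot be simultaneously compact and an isometry when $\dim H = \infty$: choosing an orthonormal sequence $(e_k)_{k \ge 1}$ in $H$, the sequence $(V^n e_k)_{k \ge 1}$ lies in the closed unit ball of $H$ yet satisfies $\norm{V^n e_j - V^n e_k}^2 = \norm{e_j - e_k}^2 = 2$ for $j \neq k$, so it admits no Cauchy — hence no convergent — subsequence, contradicting the compactness of $V^n$. Therefore $V_{t_0}$ is not compact, and since $t_0 > 0$ was arbitrary, $V_t$ is not compact for any $t \ge 0$. The argument is essentially routine; the only points that need a little care are the standing hypothesis $\dim H = \infty$ (with the separate and trivial treatment of $t = 0$) and the choice of an integer $n > t_0$, which is exactly what lets the semigroup identity convert compactness of $V_{t_0}$ into compactness of the isometric power $V^n$.
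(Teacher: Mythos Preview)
Your proof is correct and follows essentially the same route as the paper's: both use the two-sided ideal property of $\mathcal{K}(H)$ together with the semigroup law to make some integer power $V^N$ compact, then derive a contradiction from the isometry of $V^N$ via an orthonormal sequence. The only cosmetic difference is the endgame (you show $(V^n e_k)$ has no Cauchy subsequence, while the paper notes $\norm{V_N e_k}\to 0$), and you make explicit the standing hypothesis $\dim H=\infty$ and the $t=0$ case, which the paper leaves implicit.
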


\begin{proof}
    Assume that there exists $t_0 > 0$ such that $V_{t_0}$ is compact. Since $\mathcal{K}(H)$ is a bilateral ideal, it comes that $V_t$ is compact for every $t \ge t_0$ from the algebraic property of the semigroup. It comes also that, for every orthonormal sequence  $(e_n)_{n \ge 0}$ of $H$, $\norm{V_t e_n} \underset{n \rightarrow \infty}{\longrightarrow} 0$ for every $t \ge t_0$. But, since $V$ is isometric, $V_N = V^N$ is also isometric for every $N \in \N$ and we get $$\norm{V_N e_n} = \norm{e_n} = 1.$$ For $N \ge t_0$, we obtain a contradiction and so $V_t$ is not compact for every $t > 0$.  
\end{proof}

\textbf{Acknowledgments:} The authors are grateful to E. Fricain for his comments and useful remarks.

\bibliographystyle{plain}



\end{document}